\definecolor{darkgreen}{rgb}{0,0.5,0}
\titleformat{\section}{\normalfont\bfseries\filcenter}{\thesection}{1em}{}
\titleformat{\subsection}{\normalfont\bfseries}{\thesubsection}{1em}{}
\titleformat{\subsubsection}{\normalfont\bfseries}{\thesubsubsection}{1em}{}
\def\Alphabet{A,B,C,D,E,F,G,H,I,J,K,L,M,N,O,P,Q,R,S,T,U,V,W,X,Y,Z}
\def\alphabet{a,b,c,d,e,f,g,h,i,j,k,l,m,n,o,p,q,r,s,t,u,v,w,x,y,z}
\def\endpiece{xxx}
\def\makeAlphabet[#1]{\expandafter\makeA#1,xxx,}
\def\makealphabet[#1]{\expandafter\makea#1,xxx,}
\def\makeA#1,{\def\temp{#1}\ifx\temp\endpiece\else%
\mkbb{#1}\mkfrak{#1}\mkbf{#1}\mkcal{#1}\mkscr{#1}\mkbs{#1}\expandafter\makeA\fi}%
\def\makea#1,{\def\temp{#1}\ifx\temp\endpiece\else\mkfrak{#1}\mkbf{#1}\mkbs{#1}\expandafter\makea\fi}%
\def\mkbb#1{\expandafter\def\csname bb#1\endcsname{\mathbb{#1}}}
\def\mkfrak#1{\expandafter\def\csname fr#1\endcsname{\mathfrak{#1}}}
\def\mkbf#1{\expandafter\def\csname b#1\endcsname{\mathbf{#1}}}
\def\mkcal#1{\expandafter\def\csname c#1\endcsname{\mathcal{#1}}}
\def\mkscr#1{\expandafter\def\csname s#1\endcsname{\mathscr{#1}}}
\def\mkbs#1{\expandafter\def\csname bs#1\endcsname{{\boldsymbol{#1}}}}
\def\makeop[#1]{\xmakeop#1,xxx,}
\def\mkop#1{\expandafter\def\csname #1\endcsname{{\mathrm{#1}}}} %
\def\xmakeop#1,{\def\temp{#1}\ifx\temp\endpiece\else\mkop{#1}\expandafter\xmakeop\fi}%
\def\makeup[#1]{\xmakeup#1,xxx,}
\def\mkup#1{\expandafter\def\csname #1\endcsname{{\mathrm{#1}\,}}} %
\def\xmakeup#1,{\def\temp{#1}\ifx\temp\endpiece\else\mkup{#1}\expandafter\xmakeup\fi}%
\newcommand*{\sheafhom}{\cH\kern -.5pt om}
\newcommand{\Gm}{\bbG_{\mathrm{m}}}
\newcommand{\Sel}{\mathrm{S}}
\newcommand{\To}{\longrightarrow}
\newcommand{\eps}{\varepsilon}
\newcommand{\Z}{\bZ}
\newcommand{\Q}{\bQ}
\newcommand{\cs}{\chi(\sigma)}
\newcommand{\ct}{\chi(\tau)}
\newcommand{\csd}{\chi'(\sigma)}
\newcommand{\cv}[1]{\widehat{#1}}
\newcommand\restr[2]{{
  \left.\kern-\nulldelimiterspace 
  #1 
  \vphantom{\big|} 
  \right|_{#2} 
  }}
  \newcommand{\ctp}{\mathrm{CT}}
\newcommand{\ch}{\mathrm{char}}
\newcommand{\zz}{\mathrm{Z}}
\newcommand{\orbs}{\mathrm{orbits}}
\DeclareSymbolFont{cyrletters}{OT2}{wncyr}{m}{n}
\DeclareMathSymbol{\Sha}{\mathalpha}{cyrletters}{"58}
\renewcommand{\div}{\operatorname{div}}
\theoremstyle{plain}
    \newtheorem{theorem}{Theorem}[section]
    \newtheorem{proposition}[theorem]{Proposition}
    \newtheorem{lemma}[theorem]{Lemma}
    \newtheorem{corollary}[theorem]{Corollary}
      \newtheorem{proposition-definition}[theorem]{Proposition-Definition}
\theoremstyle{definition}
    \newtheorem{definition}[theorem]{Definition}
    \newtheorem{remark}[theorem]{Remark}
\newcommand{\note}[1]{\textcolor{red}{\sf[#1]}}
\begin{document}
\author[Shukla, H.]{Himanshu Shukla}
\thanks{HS was supported by Deutsche Forschungsgemeinschaft (DFG)-Grant STO 299/17-1 during this research.}
\address{Mathematisches Institut, Universit\"{a}t Bayreuth, Germany}
\email{\href{Himanshu.Shukla@uni-bayreuth.de}{Himanshu.Shukla@uni-bayreuth.de}}

\author[Stoll, M.]{Michael Stoll}
\address{Mathematisches Institut, Universit\"{a}t Bayreuth, Germany}
\email{\href{Michael.Stoll@uni-bayreuth.de}{Michael.Stoll@uni-bayreuth.de}}

\title[The Cassels-Tate pairing on 2-Selmer groups]%
      {The Cassels-Tate pairing \\ on 2-Selmer groups of elliptic curves}

\begin{abstract}
	We explicitly compute the Cassels-Tate pairing on the 2-Selmer
	group of an elliptic curve using the Albanese-Albanese definition of the
	pairing given by Poonen and Stoll. This leads to
	a new proof that a pairing defined by Cassels on the 2-Selmer groups of elliptic curves
	agrees with the Cassels-Tate pairing.
\end{abstract}

\maketitle


\section{Introduction}

Let $E$ be an elliptic curve over an algebraic number field~$K$. Then
according to the Mordell-Weil theorem, the set~$E(K)$ of $K$-rational
points on~$E$ is a finitely generated abelian group, so that
$E(K)\cong E(K)_{\tors}\oplus \Z^{r_{E/K}}$
with finite torsion group $E(K)_{\tors}$ and an integer~$r_{E/K} \ge 0$, which
is called the \emph{algebraic rank} or the \emph{Mordell-Weil rank} of $E/K$.
It is then a natural problem to determine the group structure
of~$E(K)$ for a given elliptic curve~$E/K$.
In practice, determining the torsion subgroup $E(K)_{\tors}$ for $E/K$ is not hard
(see \cite{silverman1}*{Theorem VII.3.4}). On the other hand, so far there
is no algorithm known that provably determines the rank~$r_{E/K}$.

By exhibiting independent points in~$E(K)$, one obtains a lower bound on the rank.
It is also possible to obtain upper bounds on~$r_{E/K}$, as follows.
Fix an integer $n \ge 2$. The short exact sequence of $K$-Galois modules
\[ 0 \To E(\bar{K})[n] \To E(\bar{K}) \stackrel{{}\cdot n}{\To} E(\bar{K}) \To 0 \]
induces, via the long exact sequence in Galois cohomology, a short exact sequence
\[ 0 \To E(K)/nE(K) \To \H^1(K, E(\bar{K})[n]) \To \H^1(K, E(\bar{K}))[n] \To 0 \,. \]
We can apply the same construction with $K$ replaced by any of its completions~$K_v$,
where $v$ runs through the places of~$K$. We then obtain the following commutative diagram
with exact rows.
\[ \xymatrix{0 \ar[r]
              & \dfrac{E(K)}{nE(K)} \ar[r] \ar[d]
							& \H^1(K, E(\bar{K})[n]) \ar[r] \ar[d] \ar[dr]^{\alpha}
							& \H^1(K, E(\bar{K}))[n] \ar[r] \ar[d]
							& 0 \\
						 0 \ar[r]
						  & \displaystyle\prod_v \dfrac{E(K_v)}{nE(K_v)} \ar[r]
						  & \displaystyle\prod_v \H^1(K_v, E(\overline{K_v})[n]) \ar[r]
						  & \displaystyle\prod_v \H^1(K_v, E(\overline{K_v}))[n] \ar[r]
						  & 0}
\]
It follows that $E(K)/nE(K)$ maps injectively into the
\emph{$n$-Selmer group} of~$E/K$, $\Sel^{(n)}(E/K) \coloneqq  \ker(\alpha)$.
More precisely, there is a short exact sequence
\begin{equation} \label{eqn:descent_sequence}
  0 \To E(K)/nE(K) \To \Sel^{(n)}(E/K) \to \Sha(E/K)[n] \To 0 \,,
\end{equation}
where
\[ \Sha(E/K) \coloneqq  \ker\bigl(\H^1(K, E(\bar{K})) \to \prod_v \H^1(K_v, E(\overline{K_v}))\bigr) \]
is the \emph{Shafarevich-Tate group} of~$E/K$.

The relevant fact here is that $\Sel^{(n)}(E/K)$ is \emph{finite} and
(at least in principle; see~\cite{stosurvey}) \emph{computable}. Since
\[ \#E(K)_{\tors}[n] \cdot r_{R/K}^n = \#\bigl(E(K)/nE(K)\bigr) \le \#\Sel^{(n)}(E/K) \,, \]
this gives, for each~$n$, an upper bound on~$r_{E/K}$, which is
sharp if and only if $\Sha(E/K)[n] = 0$.
For details on various interpretations of Selmer groups and their effective
computation see also \cite{cfoss1} and~\cite{effective_p_descent}.

It is a standard conjecture that $\Sha(E/K)$ is finite. This would imply
that for suitable~$n$, the bound will indeed be sharp. By work of
Kolyvagin, Wiles, and others, finiteness of~$\Sha(E/K)$ is known when
$K = \Q$ and the \emph{analytic rank} of~$E$, which is the order of
vanishing of the Hasse-Weil $L$-function of~$E$ at $s = 1$, is at most~$1$;
in this case the algebraic rank agrees with the analytic rank (this is
the weak Birch and Swinnerton-Dyer conjecture). But in general, the
conjecture is wide open.

It is easy to see that the natural map $E[n^2] \to E[n]$ induces a homomorphism
$s_n \colon \Sel^{(n^2)}(E/K) \to \Sel^{(n)}(E/K)$, which fits into a commutative
diagram as follows.
\[ \xymatrix{\dfrac{E(K)}{n^2 E(K)} \ar[r] \ar[d] & \Sel^{(n^2)}(E/K) \ar[d]^{s_n} \\
						 \dfrac{E(K)}{n E(K)} \ar[r] & \Sel^{(n)}(E/K)}
\]
This shows that the image of $E(K)/nE(K)$ in~$\Sel^{(n)}(E/K)$ is contained
in the image of~$s_n$. So we may be able to improve our bound on the rank
if we can determine this image. This can be done by computing the $n^2$-Selmer
group. Alternatively, we can use the \emph{Cassels-Tate pairing}. This is
an alternating pairing
\[ \langle {\cdot}, {\cdot} \rangle_\ctp \colon \Sha(E/K) \times \Sha(E/K) \to \Q/\Z \,, \]
which is perfect after dividing out the infinitely
divisible subgroup (which conjecturally is trivial; see above).
We will use the abbreviation CTP to refer to the pairing.
By pull-back, we obtain a pairing on~$\Sel^{(n)}(E/K)$, which we will also call~CTP.
The properties of the CTP imply that the image of~$s_n$ is the same
as the (left or right) kernel of the CTP on~$\Sel^{(n)}(E/K)$.
So if we can compute the CTP on the $n$-Selmer group, then we
can get a bound on~$r_{E/K}$ that is equivalent to the bound
obtained from the $n^2$-Selmer group.

See~\cite{psalb} for detailed information on the CTP (which can
be defined more generally on $\Sha(A/K) \times \Sha(A^\vee/K)$, where
$A/K$ is an abelian variety with dual abelian variety~$A^\vee$).
In particular, this paper provides four different definitions of the
pairing (and shows that they are all equivalent), among them
the \emph{Weil-pairing} definition~\cite{psalb}*{\S 12.2},
the \emph{homogeneous space definition}~\cite{psalb}*{\S 3.1}
and the \emph{Albanese-Albanese definition}~\cite{psalb}*{\S 12.1}.

Cassels~\cite{cas98} defined a pairing on
$\Sel^{(2)}(E/K) \times \Sel^{(2)}(E/K)$ using explicit models for the
$2$-coverings representing elements of the $2$-Selmer group, which was
shown to be the same as the CTP in~\cite{yoga}.
This pairing of Cassels can be seen as lying  somewhere in between
the Weil-pairing and the homogeneous space definitions. Later Cassels's
approach was generalized by Swinnerton-Dyer~\cite{sd22m},
Fisher and Newton~\cite{fisnew} and Fisher and van Beek~\cite{fisvanbeek}
to compute the CTP on $\Sel^{(2)}(E/K) \times \Sel^{(2^m)}(E/K)$,
$\Sel^{(3)}(E/K) \times \Sel^{(3)}(E/K)$ and for Selmer groups of
isogenies of odd prime degrees, respectively.
Using an approach of Donnelly based on the
homogeneous space definition of the CTP,
Fisher~\cite{fisdonsimp} has given another way of computing the CTP
on $\Sel^{(2)}(E/K)\times \Sel^{(2)}(E/K)$. Fisher also has an approach based
on the homogeneous space definition for $\Sel^{(3)}(E/K) \times \Sel^{(3)}(E/K)$ (unpublished).

The aim of this article is to compute the CTP on $\Sel^{(2)}(E/K)\times \Sel^{(2)}(E/K)$
using the Albanese-Albanese definition. It was remarked in~\cite{psalb} that designing
an algorithm to compute the CTP on the Selmer groups associated to the Jacobian of a curve
via the Albanese-Albanese definition may be easier than via the homogeneous space
or Weil pairing definitions, since it involves
dealing only with divisors on the curve; to the best of our knowledge this article is
the first attempt to do so.

We introduce two technical innovations. The first is to extend the two
Galois-equivariant pairings (between principal divisors and degree zero
divisors) used in the Albanese-Albanese definition to make them everywhere
defined, rather than only on divisors with disjoint support. This helps us
avoid some complications which arise from the assumption used in the original
definition, which demands that the lifts of certain elements of~$\Pic^0(E)$ to
$\Div^0(E)$ have disjoint support.
The second innovation is to write an element $a \in \Sel^{(2)}(E)$ as a
sum of quantities in~$\H^1(K, E(\bar{K})[2])$ not necessarily belonging
to~$\Sel^{(2)}(E)$. Using this we construct some maps (that may depend
on choices made) that mimic the construction of the CTP, and show that
the CTP is the sum of these functions (see Section~\ref{cor_method}).
By making the choices carefully, we obtain a fairly simple proof of
the equivalence of the CTP with the pairing defined by Cassels in~\cite{cas98}.

Building on the methods of this article, the
first author has developed an algorithm to compute the CTP on
$\Sel^2(J) \times \Sel^2(J)$, where $J$ is the Jacobian
of a hyperelliptic curve having an odd-degree model.
This will be described in a forthcoming article~\cite{ctpoddhyp}.

The organization of the paper is as follows.
Sections 2 and~3 provide the necessary notations and preliminaries.
In Section~4, we compute the pairing in a special case, and then show
in Section~5 that the general case follows from the special case.
In the same section we also show that the formula for the
CTP obtained using the Albanese-Albanese definition agrees with
the definition given by Cassels.


\section{Notation} \label{notations}

For a topological group $G$ and a $G$-module $A$, let $\partial$, $\cC^{n}(G,A)$, $\cZ^n(G,A)$, $\cB^n(G,A)$ and
$\H^n(G,A)$ denote the coboundary map, and the group of continuous $n$-cochains, $n$-cocycles,
$n$-coboundaries
and $n$-cohomology classes
respectively, with respect to
the bar resolution.
For a subgroup $H$ of $G$, let $\res^H_G$ and $\cor^G_H$ denote the restriction and
corestriction maps respectively at the level of cochains.
We will drop the subscript
and superscript in $\res$ and $\cor$ when clear from the context in order to simplify the
notations. Note that for $\cor$ to make sense $H$ has to be a finite index
subgroup of $G$.

Let $\bar{K}$ and $G_K$ be an
algebraic closure of a
perfect field
$K$ and the Galois group $\Gal(\bar{K}/K)$ respectively, and if $K$ is a global
field, then for each place $v$ of
$K$ let $K_v$ denote
the completion of $K$ with respect to $v$.
Let $\Gm\coloneqq \bar{K}^\times$ as a Galois module. We fix an embedding
$\bar{K}\hookrightarrow \overline{K_v}$; this induces 
the embedding $G_{K_v}\hookrightarrow G_K$ of absolute Galois groups. 
Let $A_v$ be the $G_{K_v}$-module obtained from a $G_K$-module
$A$ via $G_{K_v}\hookrightarrow G_{K}$. We will drop the
subscript from $A_v$ whenever
clear from the context.
For a cochain $x\in\cC^i(G_K,-)$ we will denote its restriction to
$\cC^i(G_{K_v},-)$ by $x_v$.
Furthermore, for a finite $G_K$-module
$A$, let $K(A)$ denote the smallest finite normal extension of $K$ such
that $G_{K(A)}$ acts trivially on~$A$.
To simplify notation, we will denote
$\cC^i(G_K,\Gm)$ by $\cC^i(K)$, and similarly for
cocycles and cohomology classes.
Let $\Br(K)\simeq \H^2(K)$ denote the  Brauer group, and  if $K$ is a number field, 
then for a place $v$
of $K$, let $\inv_{K_v} \colon \Br(K_v)\to\Q/\Z$
denote the local invariant map, and for $x,y\in K^\times$ we denote the 
quadratic Hilbert symbol of $x$ and $y$ with respect to $K$ by $(x,y)_K$.
If $K$ is a local field, then we have that $(-1)^{2\inv_K([(x,y)])}=(x,y)_K,$,
where $[(x,y)]$ denotes
the class of the quaternion algebra $(x,y)$ in $\Br(K)$.

We assume that $\ch(K) \ne 2$. Let $E/K$ be an
elliptic curve given by the equation
\begin{equation*} 
Y^2=f(X)\coloneqq X^3+AX+B,
\end{equation*}
where $A,B\in K$.
Let $e_1,e_2,e_3\in \bar{K}$ be the roots of $f$. Define $T_i\coloneqq (e_i,0)$ for $1\leq i\leq 3$, $T_0\coloneqq (0:1:0)$ (the
 point at infinity on $E$),
and $\Delta\coloneqq  E[2]\setminus\{T_0\}$.
Note that $\Delta$ is a
$G_K$-set, i.e., a set with
$G_K$ action. For any $G_K$-set $S$ and a $G_K$-module $M$
we define the $G_K$-module~$M^S$ of maps $S \to M$. Its elements can be
interpreted as formal sums of the form $\sum\limits_{s\in S} a_s(s)$ with $a_s\in M$.
The group structure of~$M^S$ is defined point-wise and the $G_K$-action is naturally defined as
$\sigma\cdot\sum\limits_{s \in S} a_s(s)\coloneqq  \sum\limits_{s \in S}\sigma a_s(\sigma s)$, for $\sigma\in G_K$.
In view of the above one can show that
$$\mu_2^\Delta \cong E[2]\oplus \langle(-1)(T_1)+(-1)(T_2)+(-1)(T_3)\rangle;$$
the inclusion $E[2] \hookrightarrow \mu_2^\Delta$ is induced by the Weil pairing,
$P \mapsto \sum_{T \in \Delta} e_2(P, T) (T)$.
Let $A\coloneqq  K[X]/\langle f(X)\rangle$ denote the \'{e}tale algebra
associated to $\Delta$. Then we have
\begin{equation}\label{h1_group_representation}
 \H^1(G_{K},E[2])\cong \mathrm{ker}
\left(N \colon A^{\times}/(A^{\times})^2\longrightarrow K^{\times}/(K^{\times})^2\right),
\end{equation}
where $N$ denotes the
map induced by the norm map
from $A$ to $K$.

Let $\bar{A}\coloneqq A\otimes_K \bar{K}$.
The elements of $A^\times/(A^\times)^2$ can be represented by elements
of~$A^\times$, which can be written in the form $\beta = l_0 + l_1 \theta + l_2 \theta^2$,
where $l_0, l_1, l_2 \in K$ and $\theta$ is the image of~$x$ in~$A$.
We set $\beta_i =l_0+l_1e_i+l_2e_i^2 \in K(e_i)^\times$.
Under the identification $A \hookrightarrow \bar{A} \cong \bar{K}^3$,
$\beta$ is then mapped to $(\beta_1, \beta_2, \beta_3)$.

If $\beta$ represents an element~$a$ of~$\H^1(G_K,E[2])$ via~\eqref{h1_group_representation},
then $\beta_1\beta_2\beta_3\in (K^\times)^2$.
Fix square-roots $\sqrt{\beta_i}$ of $\beta_i$, for
$1\leq i\leq 3$, and consider the formal sum $\sum\limits_{i=1}^3
\sqrt{\beta_i}(T_i)\in \bar{A}^\times=\Gm^\Delta$.
This element is a square root $\sqrt{\beta}$ of $\beta\in A^\times$ considered 
as an element of $\Gm^\Delta$. Since the 1-coboundary
$\sigma\mapsto\sigma\sqrt{\beta}/\sqrt{\beta}$ then takes values in $\mu_2^\Delta$ because 
$\beta$ is fixed by $G_K$, it represents a 1-cocycle
$\chi_a\in\cZ^1(G_K,\mu_2^\Delta)$.
The cocycle $\chi_a$ is associated to a 1-cocycle with values
in $E[2]$ representing $a$  via
 $(\Z/2\Z)^\Delta\to E[2]$ sending $\sum\limits_{i=1}^3 a_i(T_i)\mapsto \sum\limits_{i=1}^3 a_iT_i$ and the identification $\mu_2\cong \Z/2\Z$ as $G_K$-modules.

Note that $\mu_2^\Delta\simeq \bigoplus\limits_\orbs\mu_2^{\Delta_i}$, 
where $\Delta_i$ are the $G_K$-orbits of $\Delta$. This induces an isomorphism 
$\cZ^1(G_K,\mu_2^\Delta)\simeq\bigoplus\limits_\orbs\cZ^1(G_K,\mu_2^{\Delta_i})$, so
$\chi_a(\sigma)$ factors through the orbits of $\Delta$ i.e.
$\chi_a(\sigma)=\sum\limits_{\orbs}\chi_{a,i}(\sigma)$, where $\chi_{a,i}$ 
corresponds to $\chi_a$ when the support $\Delta$ is replaced by a 
$G_K$-orbit $\Delta_i$ of $\Delta$.
We associate with $\sum\limits_{i=1}^3a_i(T_i)\in\mu_2^\Delta$ the triple
$(a_1,a_2,a_3)\in\mu_2^3$, and
henceforth will use this
representation for the elements of $\mu_2^\Delta$.
Write $\hat{0}$ for the
triple $(1,1,1)$ and $\hat{i}$ for the triple with $1$ at $i^{\mathrm{th}}$
position
and $-1$ otherwise.
The action of $G_K$ on
the triples
in $\mu_2^3$ is
induced from the
action on $\mu_2^\Delta$.
If $(x_1,x_2,x_3)$ is a triple
representing an element of $(\mu_2^\Delta)^{G_K}$, then define the
product
$\prod\limits_{i}^\diamond x_i$ to be the product
taken over one representative~$i$ of each $G_K$-orbit on~$\Delta$
(note that $x_i = x_j$ when $i$ and~$j$ are in the same orbit).


\section{Preliminaries}\label{preliminaries}

From now on, we fix a number field~$K$ and an elliptic curve~$E/K$.
Recall the definitions of the Shafarevich-Tate and Selmer groups of~$E/K$
and the ``$n$-descent sequence''~\eqref{eqn:descent_sequence}
from the introduction.

We now recall some equations which will be useful later.
Let $P=(x,y)$ on $E/K$. Then
we have that
\begin{equation}\label{addTi}
	P+T_i=\left(\frac{e_ix+e_je_k-e_i(e_j+e_k)}{x-e_i},\frac{-(e_j-e_i)(e_k-e_i)y}{(x-e_i)^2}\right),
\end{equation}
where $\{i,j,k\} = \{1,2,3\}$.
Using the fact that
the line through $P$ and $T_i$ passes through $-P-T_i$ and equation
\eqref{addTi} we deduce that
\begin{equation}\label{xtieiandy}
\frac{x(\pm P+T_i)-e_i}{x(\pm P)-e_i}=\frac{(e_j-e_i)(e_k-e_i)}{(x-e_i)^2}=-\frac{y(\pm P+T_i)}{y(\pm P)}
\end{equation}
We also deduce from~\eqref{addTi} that
\begin{equation}\label{xpplustj-ei}
x(\pm P+T_j)-e_i = \frac{(x-e_k)(e_j-e_i)}{x-e_j},
\end{equation}
which along with the fact that the line passing through
$P+T_i$, and $T_k$ passes through
$-P+T_j$ gives
\begin{equation}\label{ypxpij}
\frac{y(\pm P+T_i)(x(\pm P)-e_k)}{y(\pm P)(x(\pm P+T_i)-e_k)}=\frac{(3B+2Ae_i)(x-e_k)}{e_i(x-e_i)(x-e_j)(e_i-e_k)}=\frac{(x-e_k)^2(e_j-e_i)}{y^2}.
\end{equation}


\subsection{Group cohomology}\label{grpcoho}

We now recall some results from group cohomology about $\cor$, $\res$ and the local invariant maps $\inv$.
Let $G$ be a group and let $H\leq  G$ be a finite index subgroup of~$G$. Let
$A$ be a $G$-module; then for $n\geq 0$ we have a homomorphism
called \emph{corestriction} on $n$-cochains:
\begin{equation}\label{cor1}
\cor^G_H \colon \cC^{n}(H,A)\to \cC^{n}(G,A).
\end{equation}
We use the explicit description of $\cor^G_H$ with respect to the standard
resolution given in
\cite{NSW2.3}*{\S I.5.4} to derive an explicit
description of $\cor^G_H$ with respect to the bar resolution.
Fix a set $R$ of representatives of right cosets, $Hg$, for $g\in G$.
We have a map $r \colon G \to R$, which maps $g$ to the representative of $Hg$
in $R$. This induces
a map $m \colon G\to H$ given by
$m(\sigma) = \sigma r(\sigma)^{-1}$.
Then using the
explicit
isomorphisms
between cochains with respect to
the bar resolution and
homogeneous cochains with respect to the standard
resolution we get
\begin{multline}\label{corbarres}
\left(\cor^G_H(\gamma)\right)(\sigma_1,\ldots,\sigma_n)\coloneqq
    \sum\limits_{g\in
    R}g^{-1}\gamma(m(g\sigma_1),m(g\sigma_1)^{-1}m(g\sigma_1\sigma_2),\ldots\\
    \ldots,m(g\sigma_1\ldots\sigma_{n-1})^{-1}m(g\sigma_1\ldots \sigma_n)).
\end{multline}

We recall some properties of $\cor$,
$\res$ and $\inv$ in the
following proposition.
\begin{proposition}[\cite{NSW2.3}*{Corollary 1.5.7, 7.1.4,  Proposition 1.5.3}]\label{rescor}
	Let $K\subset L\subset \bar{K}$ be a tower of fields with absolute
	Galois groups $G_K$, $G_L$ respectively,
	and $[L:K]<\infty$. For a $G_K$-module $A$ we have:
\begin{enumerate}
\item If $\gamma\in\H^{n}(G_K,A),$ then
    $\cor^{G_K}_{G_L}\circ\res_{G_K}^{G_L}(\gamma)= [L:K]\gamma$.
	\item If $K$ is a local field and $\gamma\in\H^{2}(K),$ then
$\inv_K\circ\cor^{G_K}_{G_L}(\gamma)=\inv_L(\gamma)$, and
		$\inv_L\circ\res^{G_L}_{G_K}(\gamma)=[L:K]\inv_K(\gamma)$.
			\item 	Let $\gamma\in \cC^{n}(G_K,A)$ and
                $\gamma'\in\cC^{m}(G_L,A')$, where $A$, $A'$ are $G_K$-modules. 
                Then we have
		\begin{equation}
		\cor^{G_K}_{G_L}(\res_{G_K}^{G_L}(\gamma)\cup\gamma')=\gamma\cup\cor^{G_K}_{G_L}(\gamma') \tag{whenever $\cup$ is defined}.
	\end{equation}
\end{enumerate}
\end{proposition}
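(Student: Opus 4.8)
The plan is to treat the three claims separately, reducing the two local statements in~(2) to the global identity~(1) wherever possible. For~(1), I would first check the formula in degree $n=0$ and then propagate it upward by dimension shifting. A $0$-cochain is just an element of~$A$; here $\res_{G_K}^{G_L}$ is the inclusion $A^{G_K}\hookrightarrow A^{G_L}$, while specialising~\eqref{corbarres} to $n=0$ shows that $\cor^{G_K}_{G_L}\colon A^{G_L}\to A^{G_K}$ is $a\mapsto\sum_{g\in R}g^{-1}a$, the sum taken over the fixed system $R$ of coset representatives. For $a\in A^{G_K}$ every summand equals~$a$, so the composite is multiplication by $\#R=[L:K]$. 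To reach arbitrary~$n$, I would embed $A$ into a coinduced $G_K$-module; since both $\res$ and $\cor$ commute with the connecting homomorphisms of the resulting short exact sequence, the scalar $[L:K]$ established in degree~$0$ carries over from degree~$n$ to degree~$n+1$, and induction finishes the claim.

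For~(2), the first formula $\inv_K\circ\cor^{G_K}_{G_L}=\inv_L$ is the standard compatibility of the local invariant map with corestriction, which I would prove by reducing to an unramified extension, over which both the invariant and the corestriction are computed explicitly from valuations and the action of Frobenius. Granting it, the second formula follows formally from~(1): for $\gamma\in\H^2(K)$ one has
\[ \inv_L\bigl(\res_{G_K}^{G_L}\gamma\bigr)=\inv_K\bigl(\cor^{G_K}_{G_L}\res_{G_K}^{G_L}\gamma\bigr)=\inv_K\bigl([L:K]\gamma\bigr)=[L:K]\inv_K(\gamma), \]
where the first equality is the corestriction formula and the second is~(1).

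Finally, for the projection formula~(3) I would argue directly on cochains. Substituting the explicit corestriction~\eqref{corbarres} together with the standard cup-product formula for the bar resolution, $(\gamma\cup\gamma')(\sigma_1,\dots,\sigma_{n+m})=\gamma(\sigma_1,\dots,\sigma_n)\otimes(\sigma_1\cdots\sigma_n)\gamma'(\sigma_{n+1},\dots,\sigma_{n+m})$, into both sides, the key observation is that $\res_{G_K}^{G_L}\gamma$ is insensitive to the coset-representative twisting in~\eqref{corbarres}, so the factor contributed by~$\gamma$ can be pulled outside the sum over~$R$ and matched with the right-hand side term by term. I expect this cochain-level matching to be the main obstacle: the bookkeeping with the maps $r$ and~$m$, in particular the telescoping of the arguments $m(g\sigma_1\cdots\sigma_k)^{-1}m(g\sigma_1\cdots\sigma_{k+1})$, is delicate, and passing to homogeneous cochains---where corestriction is a plain sum over representatives and the cup product is concatenation of arguments---makes the required reindexing most transparent.
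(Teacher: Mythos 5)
The paper does not prove this proposition; it imports it wholesale from Neukirch--Schmidt--Wingberg (Corollary 1.5.7, Corollary 7.1.4, Proposition 1.5.3), so there is no internal proof to compare against. Your sketch is the standard textbook argument and is essentially what the cited source does for parts (1) and (3): the degree-zero computation $\cor\circ\res = \#R = [L:K]$ followed by dimension shifting via a coinduced module, and the cochain-level projection formula, which is indeed most transparent in homogeneous cochains where $\cor$ is a plain sum over representatives. The one place where you deviate from the usual logic is part (2), and it is the soft spot of your plan. The standard route proves $\inv_L\circ\res^{G_L}_{G_K}=[L:K]\inv_K$ \emph{first}, directly, because the invariant map is defined through the unramified quotient and the effect of $\res$ on $\H^2(\Gal(K^{\mathrm{ur}}/K),\bbZ)\cong\bbQ/\bbZ$ is an explicit multiplication by the residue degree; the corestriction formula $\inv_K\circ\cor=\inv_L$ is then \emph{deduced} from this together with part (1) and the surjectivity of $\res\colon\H^2(K)\to\H^2(L)$ (itself a consequence of the restriction formula and the bijectivity of $\inv$). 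You propose the reverse order: prove the $\cor$ formula directly by ``reducing to an unramified extension'' and then derive the $\res$ formula formally. The derivation you write is valid granted the $\cor$ formula, but the reduction you gesture at is exactly where the content lies: for a general finite extension $L/K$ it is not clear how to reduce the computation of $\cor$ on $\H^2(L)$ to the unramified case without already having the restriction formula (or an equivalent input) in hand. If you keep your ordering you must supply that reduction; the painless fix is to swap the two steps of part (2). None of this affects the paper, which uses the statement as a black box.
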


We now recall the double coset formula.
Let $H$ be a subgroup of $G$ and $B$ be an $H$-submodule of a $G$-module $A$. For
$\sigma\in G$, the maps $H^\sigma\coloneqq \sigma H\sigma^{-1}\to H$ given by
$\tau\mapsto\sigma^{-1}\tau\sigma$ and $B\to \sigma B$ given by $b\mapsto \sigma
b$ form a compatible pair i.e. $\sigma ((\sigma^{-1}\tau\sigma)(b))
=\tau(\sigma(b))$, for all $b\in B$ and $\tau\in H^\sigma$.
Hence we have a well-defined map
$$\sigma_* \colon \cC^i(H,B)\to \cC^i(H^\sigma,\sigma B).$$
One can show that $\sigma_*$ commutes with $\inf$, $\cor$, $\res$, $\cup$ and $\partial$ \cite{NSW2.3}*{Proposition 1.5.3, 1.5.4}.
In particular, if $H=G$ and $B=A$, then using the fact that $\sigma_*$ commutes with dimension shifting, one can show that  $\sigma_*$ is the identity on $\H^i(G,A)$.
If $H$, $U$ are closed subgroups of $G$ with $[G:H]< \infty$, then we have the double coset formula which also holds at the level of cochains \cite{NSW2.3}*{Proposition 1.5.6}:
\begin{equation}\label{doublecosetformula}
\res^U_G\cor^G_H(z)=\sum\limits_{g\in R}\cor^U_{U\cap g H g^{-1}}\res^{U\cap g Hg^{-1}}_{g H g^{-1}} g_*(z),
\end{equation}
where $R$ is a system of double coset representatives $G=\bigsqcup\limits_{g\in R} UgH$ and $z\in \cC^i(H, A)$ ($\sqcup$ denotes the disjoint union).

Recall that for each place $v$ of $K$ we fixed an embedding 
$\bar{K}\hookrightarrow \overline{K_v}$.
Via this embedding, $G_{K_v}$ can be considered as a closed subgroup of $G_K$ 
($G_K$ is Hausdorff and $G_{K_v}$ is compact). Let $L$ be a finite extension of
$K$ and $w_1,\ldots, w_m$ be all the distinct places  of $L$ above $v$ with 
$w_1$ being the one induced by the fixed embedding $\bar{K}\hookrightarrow
\overline{K_v}$.
There is a $g_i\in G_K$ corresponding to each $w_i$ such that $w_i$ is induced
by the composite embedding $g_i:\bar{K}\to \bar{K}\hookrightarrow \overline{K_v}$. If 
$L=K(\theta_1)$, then $g_i$ correspond to those embeddings of $L\hookrightarrow
\bar{K}$ 
which map $\theta_1$ to one of its $G_K$-conjugates that is not a $G_{K_v}$-conjugate.
This implies that $\{g_1, \ldots, g_m\}$ is a system of double coset representatives of $G_K$
with respect to $G_{K_v}$ and $G_L$, i.e., $G_K=\bigsqcup\limits_{i=1}^m G_{K_v}g_iG_L$. Further, note that
$G_{L_{w_i}}\subset G_{K_v}$ fixes the extension $g_iL$ and hence
$G_{L_{w_i}}=G_{K_v}\cap g_iG_Lg_i^{-1}$. 
Using the double coset formula \eqref{doublecosetformula} we have the following remark:
\begin{remark}
Let $K$, $L$, $w_i$, $g_i$ be as above. Then
\begin{equation} \label{doublecosetfornumfields}
(\cor^{G_K}_{G_L}(z))_v=\res^{G_{K_v}}_{G_K}\circ\cor^{G_K}_{G_L}(z)=\sum\limits_{i=1}^m 
    \cor^{G_{K_v}}_{G_{L_{w_i}}}\circ\res^{G_{L_{w_i}}}_{G_{g_iL}} \circ(g_i)_*(z).
\end{equation}
\end{remark}

The following two subsections recall two
definitions of the CTP. We first consider the definition
by Cassels in~\cite{cas98} and then the Albanese-Albanese
definition in~\cite{psalb}. We continue to assume that $K$ is a number
field.


\subsection{Cassels's pairing}\label{seccaspairing}

Let $a,a'\in \Sel^{(2)}(E)$ be represented
by
$\beta = (\beta_1,\beta_2,\beta_3)$ and $\beta' = (\beta_1',\beta_2',\beta_3')$
as discussed in Section~\ref{notations}, and
for pairwise distinct $1\leq i,j,k\leq 3$, define quadratic
forms in variables $(U_1,U_2, U_3, T)$ by 
\begin{equation}\label{hi}
	H_i(U_1,U_2, U_3, T)\coloneqq (\beta_j\Gamma_j^2-\beta_k\Gamma_k^2)/(e_j - e_k) + T^2,
\end{equation}
where $\Gamma_i\coloneqq U_1+U_2e_i+U_3e_i^2$.
Note that $(U_1,U_2,U_3)\mapsto (\Gamma_1,\Gamma_2,\Gamma_3)$ is just a linear change of
coordinates.
Also note that each $H_i$ is defined over 
$K(e_i)$.
Any two of these quadratic forms define a projective curve $D_{a}$
in $\mathbb{P}^3$ with coordinates $U_1,U_2,U_3,T$.
Choosing $j,k\in\{1,2,3\}$ cyclically 
for a given $i\in\{1,2,3\}$ we get 
$\sum\limits_{i=1}^3(e_j-e_k)H_i=0$. 
Hence any two of the $H_i$ define the same curve $D_a$ over $K$.
$D_a$ has points 
for every completion $K_v$ of $K$ and is a 2-covering of $E$ representing $a$. 
For details see \cite[\S 2]{cas98}.

Since $D_{a}$ has a point on
the affine patch $T\ne 0$ locally everywhere, each $H_i$ has a 
non-trivial solution over $K_v(e_i)$
for every place $v$ of $K$, and therefore each $H_i$ has a solution over 
every completion of $K(e_i)$. This implies that for each $i$ there is a point
$\mathfrak{q}_i\coloneqq (\mathfrak{u}_1:\mathfrak{u}_2:\mathfrak{u}_3:1)$ or
$(\Gamma_j^*:\Gamma_k^*:1)$ defined
over $K(e_i)$ satisfying
$H_i=0$, which is a consequence of
the
local-global principle for quadratic forms.
Let $L_i(U_1,U_2,U_3,T)$ for $1\leq i\leq 3$ be a linear form such that $L_i=0$ is
 the tangent to $H_i$ at $\mathfrak{q}_i$.

If $\mathfrak{q}_v$ is a point defined on $D_{a}$ over $K_v$, then
 Cassels' pairing \cite{cas98}*{Lemma 7.4}
is defined as follows:
\begin{equation}\label{cas}
	\langle\alpha,\alpha'\rangle_{\mathrm{Cas}}\coloneqq
    \prod\limits_{v}\prod\limits_i^\diamond(L_i(\mathfrak{q}_v),\beta_i')_{K_v(e_i)}.
\end{equation}
In \cite{cas98} Cassels showed that
the above definition
gives a well-defined pairing and is independent of the choices made.


\subsection{The Albanese-Albanese definition of the Cassels-Tate pairing}\label{defctp}

We now explain the \emph{Albanese-Albanese} definition
of the Cassels-Tate pairing on $\Sha(E)$ for an elliptic curve $E/K$, using
the notations
from Section~\ref{notations}.
Let $\Princ(E)$ denote the group of principal divisors on $E(\bar{K})$. 
Choose uniformizers $t_P$ for $P\in E(\bar{K})$ such that the map
$P\mapsto t_P$ is Galois-equivariant, and consider the
Galois-equivariant pairings
\begin{align}
    &\langle\cdot,\cdot\rangle_1 \colon \left(\Princ(E)\times\Div^0(E)\right)\to \Gm 
    \\
   &\langle\cdot,\cdot\rangle_2 \colon \left(\Div^0(E)\times\Princ(E)\right)\to\Gm 
\end{align}
defined as follows.
$$\langle\div(f), D\rangle_1\coloneqq \prod\limits_{P}(ft_P^{-v_P(f)})(P)^{v_P(D)},$$
and
$$\langle D,\div(f)\rangle_2\coloneqq \prod\limits_{P}(-1)^{v_P(f)v_P(D)}(ft_P^{-v_P(f)})(P)^{v_P(D)}.$$
The above pairings are well-defined and extend
the partially defined pairings
in \cite{psalb}*{\S 3.2}.
In what follows, ``$\cup_i$'' will denote the cup product of cochains with respect to
	the pairing $\langle\cdot,\cdot\rangle_i$ for $i\in\{1,2\}$,
	and we will drop the subscripts in $\cup_i$ whenever it
	is clear from the context.
Next we show that the
two pairings defined above agree on the diagonal
$\Princ(E)\times\Princ(E).$
\begin{proposition}\label{paireqondiag}
Let $\langle\cdot,\cdot\rangle_1$ and $\langle\cdot,\cdot\rangle_2$ be as above. Then $\langle\cdot,\cdot\rangle_1=\langle\cdot,\cdot\rangle_2$ on $\Princ(E)\times\Princ(E).$
\end{proposition}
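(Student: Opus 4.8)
The plan is to reduce the asserted identity to Weil reciprocity on the curve~$E_{\bar{K}}$. Every element of the diagonal is a pair of principal divisors, so I would evaluate both pairings on $(\div(f),\div(g))$ with $f,g\in\bar{K}(E)^\times$; both sides are defined here since a principal divisor lies in $\Div^0(E)$. Writing $m_P\coloneqq v_P(f)$, $n_P\coloneqq v_P(g)$ and abbreviating $\ell_P(f)\coloneqq (ft_P^{-m_P})(P)\in\Gm$ for the value at~$P$ of the valuation-zero function $ft_P^{-m_P}$, the definitions give
\[ \langle\div(f),\div(g)\rangle_1=\prod_P \ell_P(f)^{n_P} \qquad\text{and}\qquad \langle\div(f),\div(g)\rangle_2=\prod_P(-1)^{m_Pn_P}\,\ell_P(g)^{m_P}. \]

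I would then compare the two by taking their quotient and grouping the factors indexed by each~$P$. Since $(-1)^{-m_Pn_P}=(-1)^{m_Pn_P}$, the factor at~$P$ of the quotient $\langle\div(f),\div(g)\rangle_1/\langle\div(f),\div(g)\rangle_2$ is
\[ (-1)^{m_Pn_P}\,\frac{\ell_P(f)^{n_P}}{\ell_P(g)^{m_P}}. \]
The crux is to recognize this as the tame symbol $(f,g)_P=(-1)^{m_Pn_P}(f^{n_P}/g^{m_P})(P)$: writing $f=\ell_P(f)\,t_P^{m_P}(1+\cdots)$ and $g=\ell_P(g)\,t_P^{n_P}(1+\cdots)$ near~$P$, the valuation-zero function $f^{n_P}/g^{m_P}$ takes the value $\ell_P(f)^{n_P}/\ell_P(g)^{m_P}$ at~$P$. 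Hence the quotient equals $\prod_P (f,g)_P$, which is~$1$ by \emph{Weil reciprocity} on the smooth projective curve~$E_{\bar{K}}$; this yields the proposition.

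The only point requiring care is this identification of the $P$-factor with the tame symbol. One must check that the powers of the chosen uniformizer~$t_P$ cancel in the combination $\ell_P(f)^{n_P}/\ell_P(g)^{m_P}$ (they do, reflecting the fact that $(f,g)_P$ is intrinsic and independent of the choice of~$t_P$), and that the two sign contributions---the factor $(-1)^{m_Pn_P}$ built into $\langle\cdot,\cdot\rangle_2$ and the sign in the tame-symbol convention---combine correctly. Beyond this sign and uniformizer bookkeeping the argument is a direct rearrangement of the defining products, and it uses nothing about~$E$ except that $E_{\bar{K}}$ is a smooth projective curve.
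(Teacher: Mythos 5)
Your proposal is correct and follows essentially the same route as the paper: both compute the quotient $\langle\div(f),\div(g)\rangle_1/\langle\div(f),\div(g)\rangle_2$, identify the factor at each point $P$ with the tame symbol $[f,g]_P$ after observing that the powers of $t_P$ cancel, and conclude by Weil reciprocity. Your extra care about the cancellation of the uniformizer powers and the sign bookkeeping is exactly the (implicit) content of the paper's one-line computation.
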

\begin{proof}
Define the
 \emph{tame symbol} at $P$ for two nonzero functions
$f$ and $g$ as follows:     
$$[f,g]_P\coloneqq  (-1)^{v_P(f)v_P(g)}\frac{f^{v_P(g)}}{g^{v_P(f)}}(P).$$
We have
\begin{align*}
	\frac{\langle\div(f),\div(g)\rangle_{1}}{\langle\div(f),\div(g)\rangle_{2}}&=\prod\limits_P(-1)^{v_P(f)v_P(g)}\frac{\left(ft_P^{v_P(f)}\right)(P)^{v_P(g)}}{\left(gt_P^{v_P(g)}\right)(P)^{v_P(f)}}\\
	&=\prod\limits_P(-1)^{v_P(f)v_P(g)}\left(\frac{f^{v_P(g)}}{g^{v_P(f)}}\right)(P)=\prod\limits_P[f,g]_P=1.
\end{align*}
The last equality is a consequence of
strong Weil reciprocity~\cite{weilrec}.
\end{proof}

\begin{remark}
Note that one can define the above pairings for any smooth, projective,
    absolutely irreducible curve $C/K$ in the same fashion, and they will
    satisfy  Proposition \ref{paireqondiag}.
\end{remark}

Let $a,a'\in \H^1(G_K,E[n])$ be represented by 1-cocycles $\alpha,\alpha'$
with values in $E(\bar{K})[n]$.
Lift $\alpha,\alpha'$ to 1-cochains $\mathfrak{a}$ and $\mathfrak{a'}$
 with values in $\Div^0(E)$.
	Since $\partial\alpha =\partial\alpha'=0$, the cochains $\partial\mathfrak{a}$
	and $\partial \mathfrak{a}'$
take values in principal divisors as can be seen using Galois cohomology applied
to the
exact sequence
$$0\To\Princ(E)\To \Div^0(E) \To \Pic^0(E)\To 0.$$

	In what follows we will write the
group structure on the cochains/cohomology classes 
additively even when they take values 
in a multiplicative group.
However, after evaluation of cochains at certain arguments we will use the group operation of the respective group module.
For example, if $x,y\in \C^1(K)$, then
we use $+$ to denote their addition $z\coloneqq x+y$ as cochains, but for
$\sigma\in G_K$, $z(\sigma)\in\Gm$ will be written as $x(\sigma)y(\sigma)$ using the group operation of $\Gm$.

Define the 3-cocycle  $\eta\coloneqq \partial
\mathfrak{a}\cup_1\mathfrak{a}'-\mathfrak{a}\cup_2\partial\mathfrak{a}'\in\cZ^3(K)$.	
It is easy to verify that the
	cup products in $\eta$ make sense and that $\eta$ is indeed a cocycle.
By class
	field theory, we have
	$\H^{3}(K)=0$ for any number field $K$, hence
 $\eta=\partial\eps$ for some 2-cochain
	$\eps\in \mathcal{C}^2(K)$.

	Further, assume that $a\in \Sel^{(n)}(E)$, and
	let $v$ be a place
	of $K$.
	Then $\alpha_v = \partial\beta_v$ for some
	$\beta_v \in E(\overline{K_v})$, and we can lift $\beta_v$ to an element $\mathfrak{b}_v\in \Div^0(E)$.
	We then have that $\mathfrak{a}_v - \partial\mathfrak{b}_v$ takes values in
	principal divisors,
so we can define
the following 2-cocycle
	\begin{equation}
	\gamma_v = (\mathfrak{a}_v - \partial\mathfrak{b}_v) \cup_1 \mathfrak{a}'
	-\mathfrak{b}_v\cup_2\partial\mathfrak{a}'_v-\eps_v.
	\end{equation}
%
So $\gamma_v$
represents some
class $c_v \in \H^2(K_v)\simeq
\Br(K_v)$. We define the CTP on
	$\Sel^{(n)}(E)\times \H^1(G_K,E[n])$ as follows:
	\begin{definition}\label{def_CTP}
		Let $(a,a')\in\Sel^{(n)}(E)\times \H^1(G_K,E[n]),$
		and $c_v$ be as above. Then
		the Cassels-Tate pairing on
		$a,a'$ is defined as follows:
		\begin{equation}
			\langle a, a'\rangle_{\ctp}\coloneqq \sum\limits_{v}\inv_{k_v}(c_v).
		\end{equation}
	\end{definition}

\begin{remark}\label{ctpnotwd}
The above definition is well-defined up to the choice of
$\beta_v$ i.e. changing $\beta_v$
to $\beta_v+\kappa_v$ for some $\kappa_v\in E(K_v)$
changes the value of the pairing. However if
we assume that $a'\in \Sel^{(n)}(E)$, then $\langle a,a'\rangle_{\ctp}$ is
    independent of the choices made and hence is a well-defined pairing (see
    \cite{psalb}).
\end{remark}


\section{Computing CTP on $\Sel^2(E)\times \H^1(G_K, \langle
T_1\rangle)$}\label{modifiedparing}

In this section we
 compute CTP on $\Sel^2(E)\times\H^1(G_K,\langle T_1\rangle)$ (in the sense of \S \ref{defctp}) assuming
that $e_1\in K$ i.e. $[K(E[2]):K]\leq 2$.
Remark \ref{ctpnotwd} implies that
the value of CTP thus obtained depends on the choices made
during the local part of the computation. Therefore, it is one of the possible
values of CTP on $\Sel^{(2)}(E)\times\H^1(G_K,\langle T_1\rangle)$.
Henceforth, we will
always assume $1\leq i,j,k\leq 3$,
and if any subset of $i,j,k$ appear together in an expression, they will be  pairwise distinct.
 We use the notations from \S \ref{notations} during the process.

We begin with an explicit description of
$a\in\Sel^{(2)}(E)$ and $a'\in \H^1(G_K,\langle T_1\rangle)\simeq
K^\times/(K^\times)^2$, represented by the triple
$(\beta_1,\beta_2,\beta_3)$ and $\beta'\in K^\times$
respectively.
Let the 1-cocycles $\alpha$ and $\alpha'$ representing
$a$ and $a'$ respectively, be as follows:
\[
\alpha(\sigma)=\begin{cases}
	&T_0 \ \ \ \ \ \ \ \ \ \hfill{\cs=\cv{0}},\\
	\vspace{.1in}
	&T_i \ \ \ \ \ \ \hfill{\cs=\cv{i}},
\end{cases}
	\quad\mathrm{and}\quad
\alpha'(\sigma)=\begin{cases}
        &T_0 \ \ \ \ \ \ \ \ \ \hfill{\chi'(\sigma)=1},\\
        \vspace{.1in}
        &T_1 \ \ \ \ \ \ \hfill{\chi'(\sigma)=-1}.
\end{cases}
\]
Here $\chi=\chi_a$ (as defined in \S \ref{notations}), but we
have dropped the subscript for  simplicity
of notations.
Further, $\chi'(\sigma)\coloneqq \sigma(\sqrt{\beta'})/\sqrt{\beta'}$, for a fixed
square root $\sqrt{\beta'}$ of $\beta'$.

The next two subsections are dedicated to the computation of the CTP when $a,a'$
are as above.


\subsection{Global computation}\label{GlobComp}
Lift $\alpha,\alpha'$ to 1-cochains $\mathfrak{a}$, $\mathfrak{a}'$ with values in $\Div^0(E)$ as follows:
\[
\mathfrak{a}(\sigma)=\begin{cases}
	&0 \ \ \ \  \hfill{\cs=\cv{0}},\\
	\vspace{.1in}
	&(T_i)-(T_0) \ \ \  \hfill{\cs=\cv{i}},
\end{cases}
	\quad\mathrm{and}\quad
\mathfrak{a}'(\sigma)=\begin{cases}
        &0 \ \ \ \  \hfill{\chi'(\sigma)=1},\\
        \vspace{.1in}
        &(T_1)-(T_0) \ \ \  \hfill{\chi'(\sigma)=-1}.
\end{cases}
\]
We have:
\begin{equation}
\partial\mathfrak{a}(\sigma,\tau)=\begin{cases}
	&0=\div(1) \ \ \ \  \hfill{\cs=\cv{0}\ \mathrm{or}\ \ct=\cv{0}},\\
\vspace{.1in}
	&2(T_i)-2(T_0)=\div(x-e_i) \ \  \hfill{\cs=\cv{i},\ \sigma\cdot\ct=\cv{i}},\\
\vspace{.1in}
	&(T_i)+(T_j)-(T_k)-(T_0)=\div(\frac{y}{x-e_k}) \ \ \ \ \ \ \hfill{\cs=\cv{i},\ \sigma\cdot\ct=\cv{j}}.
\end{cases}
\end{equation}
Similarly, for $\mathfrak{a}'$ we have:
\begin{equation}
	 \partial\mathfrak{a}'(\sigma,\tau)=\begin{cases}
                &0=\div(1)\ \ \ \ \ \ \ \ \ \hfill{\chi'(\sigma)=1\ \mathrm{or}\ \chi'(\tau)=1},\\
        \vspace{.1in}
                &2(T_1)-2(T_0) =\div(x-e_1)\ \ \ \ \ \ \hfill{\chi'(\sigma)=\chi'(\tau)=-1}.
\end{cases}
\end{equation}
Let $t_P$ denote a unifomizer at point $P\in E(\bar{K})$.
We assume $t_{T_0}=x/y$, $t_{T_i}=-(x-e_i)/y$
and $t_P=x-x(P)$ at
all other points $P\notin E[2]$. The map defined by $P\mapsto t_p$
is Galois-equivariant. It is not hard to see that
$\langle \div(f),D\rangle_1=\langle D,\div(f)\rangle_2$ where $\div(f)$ and $D$ appear in
the values taken by $\partial\mathfrak{a},\partial\mathfrak{a}'$ and $\mathfrak{a}$, $\mathfrak{a}'$ respectively.
Therefore, we have:
\begin{align*}
	&\langle(x-e_i),(T_i)-(T_0)\rangle_1 = \frac{y^2/(x-e_i)(T_i)}
{(x-e_i)x^2/y^2(T_0)}=(e_i-e_j)(e_i-e_k),\\
	&\langle(x-e_i),(T_j)-(T_0)\rangle_1 = \frac{(x-e_i)(T_j)}
{(x-e_i)x^2/y^2(T_0)}=e_j-e_i,\\
	&\langle y/(x-e_k),(T_i)-(T_0)\rangle_1  =\frac{-y^2/(x-e_k)(x-e_i)(T_i)}
{x/(x-e_k)(T_0)}=e_j-e_i,\\
    &\langle y/(x-e_k),(T_k)-(T_0)\rangle_1 = \frac{-y(x-e_k)/y(x-e_k)(T_k)}
{x/(x-e_k)(T_0)}=-1.
\end{align*}
We set $s_{ij}\coloneqq e_i-e_j$ for $i\ne j$ and $s_i\coloneqq s_{ij}s_{ik}$.
For $\sigma, \tau, \rho \in G_K$, the cup product, $(\partial \mathfrak{a}\cup_1\mathfrak{a}')(\sigma,\tau,\rho)$
(resp. $(\mathfrak{a}\cup_2\partial\mathfrak{a}')(\sigma,\tau,\rho)$)
via the pairing $\langle\cdot,\cdot\rangle_1$ (resp. $\langle .,.\rangle_2$) is
$\langle\partial\mathfrak{a}(\sigma,\tau),\sigma\tau(\mathfrak{a}'(\rho))\rangle_1$
(resp. $\langle\mathfrak{a}(\sigma),\sigma(\partial\mathfrak{a}'(\tau,\rho))\rangle_2$)
using \cite{NSW2.3}*{Proposition 1.4.8}.
Therefore, we have
\begin{equation}
        (\partial\mathfrak{a}\cup_1\mathfrak{a}')(\sigma,\tau,\rho)=\begin{cases}
&1\ \ \ \ \ \ \ \ \ \hfill{\cs=\cv{0}\ \mathrm{or}\ \ct=\cv{0}\ \mathrm{or}\ \chi_1'(\rho)=1},\\
                \vspace{.1in}
		&s_1\ \ \ \hfill{\cs=\cv{1},\ \sigma\cdot\ct=\cv{1},\ \chi'(\rho)=-1},\\
                \vspace{0.1in}
		&s_{1j} \ \ \ \ \ \ \ \hfill{\cs=\cv{j},\ \sigma\cdot\ct=\cv{j},\ \chi'(\rho)=-1},\\
                \vspace{.1in}
		&s_{j1} \ \ \ \ \ \ \ \hfill{\substack{(\cs,\sigma\cdot\ct)=(\cv{1},\cv{j}),\ \mathrm{or}\\ (\cs,\sigma\cdot\ct)=(\cv{j},\cv{1})},\ \chi'(\rho)=-1},\\
                \vspace{.1in}
                &-1 \ \ \ \ \ \ \ \ \ \ \hfill\cs=\cv{j},\sigma\cdot\ct=\cv{k},\ \chi'(\rho)=-1,
        \end{cases}
\end{equation}
and
\begin{equation}
        (\mathfrak{a}\cup_2\partial\mathfrak{a}')(\sigma,\tau,\rho)=\begin{cases}
                &1\ \ \ \ \ \ \ \ \ \hfill{\cs=\cv{0}\ \mathrm{or}\ \chi'(\tau)=1\ \mathrm{or}\ \chi'(\rho)=1},\\
                \vspace{.1in}
                &s_1\ \ \ \hfill{\cs=\cv{1},\chi'(\tau)=\chi'(\rho)=-1},\\
                \vspace{.1in}
                &s_{j1} \ \ \ \ \ \hfill{\cs=\cv{j},\chi'(\tau)=\chi'(\rho)=-1}.
        \end{cases}
\end{equation}

We want to find a 2-cochain $\eps$ such that
$\partial\eps=\eta$, which (as we will later see) will require us to express 
certain elements as norms.
Since $D_a$ is locally everywhere soluble, using
the discussion in \S \ref{seccaspairing} we have a global solution $\mathfrak{q}_i\coloneqq (\Gamma_j^*:\Gamma_k^*:1)$
to $H_i(\Gamma_j: \Gamma_k: 1)=0$ over $K(E[2])$
for $1\leq i\leq 3$, where $H_i$ are as
in equation \eqref{hi}. Since $H_i$ is defined over $K(e_i)$ in $U_1$, $U_2$,
$U_3$ coordinates, $\Gamma_j^*$ and $\Gamma_k^*$ have $U_1,U_2,U_3$ coordinates
in $K(e_i)$. Therefore, 
we assume $\Gamma_j^*$ and $\Gamma_k^*$ to be conjugates
over $K(e_i)$,
if $e_j$ and $e_k$ are. 
Define the quantities
\begin{equation}\label{pij}
	p_{jk}\coloneqq \sqrt{\beta_j}\Gamma_j^*+\sqrt{\beta_k}\Gamma_k^*,\ \ \ \
    \mathrm{and}\ \ \ \ \ p_i\coloneqq p_{ij}p_{ik}.
\end{equation}

\begin{remark}\label{pijorbs}
	Let $\mathfrak{q}_i$ be as above.
	Let $\sigma\in G_K$ be such that $\sigma(e_i)=e_k$, then
	we can assume that the solution of the conic $H_k=0$ is
	$\mathfrak{q}_k\coloneqq \sigma(\mathfrak{q}_i)$.
	Writing $\sigma\in G_{K(e_i)}$ as
	$\sigma_s\sigma_p$,
	 where
	 $\sigma_s\in G_{K(E[2])}$ is such
	 that $\cs=\chi(\sigma_s)$,
	 and $\chi(\sigma_p)=\cv{0}$, we have
	 $$\sigma(p_{ij})=\sigma_s(p_{\sigma\cdot i,\sigma\cdot j})\ \ \text{ and }\ \
	 \sigma(p_i)=\sigma_s\left(\prod\limits_{l\ne i}
	p_{\sigma\cdot i,\sigma\cdot l}\right)=\sigma_s(p_{\sigma\cdot i}),$$
	where for indices $j,k$,
	$\sigma\cdot j=k$ if $\sigma(e_j)=e_k$.
\end{remark}
Note that there is a slight ambiguity in the above notation. The $\Gamma_i^*$ that appears
in the global point $\mathfrak{q}_j$ on $H_j=0$ may not be equal to
$\Gamma_i^*$ that appears in the
global point $\mathfrak{q}_k$ on $H_k=0$, but this will be clear from the context.

Now we resume the computation of $\eps\in \cC^2(K)$ such that $\partial\eps=\eta$.
If
$\eps\in \cC^{2}(K)$
 depends only $\ct$, $\chi'(\tau)$ and $\chi'(\rho)$, then 
$$(\partial\eps)(\sigma,\tau,\rho)=\frac{\sigma(\eps(\ct,\chi'(\tau),\chi'(\rho)))\ \eps(\cs,\chi'(\sigma),\chi'(\tau\rho))}{\eps(\cs\ct,\chi'(\sigma\tau),\chi'(\rho))\ \eps(\cs,\chi'(\sigma),\chi'(\tau))},$$
depends only on $\cs,\ct,\chi'(\sigma),\chi'(\tau)$ and $\chi'(\rho)$.
Here and henceforth we use $\eps(\tau,\rho)$ interchangeably with
$\eps(\ct,\chi'(\tau),\chi'(\rho))$.
We have the following proposition:
\begin{proposition}\label{spcaseeps}
	Let $\eps\in \cC^2(K)$ be as follows:
	\begin{equation}\label{epsilon1}
        \eps(\tau,\rho)=\begin{cases}
                &1 \ \ \ \ \ \hfill\substack{\ct=\cv{0},\chi'(\tau)\chi'(\rho)=-1\\ \mathrm{or}\ \ct=\cv{1},\chi'(\tau)\chi'(\rho)=1\\ \mathrm{or}\ \chi'(\rho)=1},\\
                \vspace{0.1in}
                &p_1\ \ \ \ \ \hfill\ct=\cv{1},\ \chi'(\tau)=1,\ \chi'(\rho)=-1,\\
                \vspace{0.1in}
                &1/p_1\ \ \ \ \ \hfill\ct=\cv{0},\ \chi'(\tau)=-1,\ \chi'(\rho)=-1,\\
                \vspace{0.1in}
                &p_{1j}\ \ \ \ \ \hfill\ct=\cv{j},\ \chi'(\tau)=1,\ \chi'(\rho)=-1,\\
                \vspace{0.1in}
                &1/p_{1j}\ \ \ \ \ \hfill\ct=\cv{k},\ \chi'(\tau)=-1,\ \chi'(\rho)=-1,
        \end{cases}
\end{equation}
	where $p_{ij}$, $p_i$ are as defined in equation \eqref{pij},
        then
        $\partial\eps=\eta$.
\end{proposition}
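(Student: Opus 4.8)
The plan is to prove $\partial\eps=\eta$ by a direct case-by-case comparison, exploiting the fact that both sides are completely determined by the tuple $(\cs,\ct,\chi'(\sigma),\chi'(\tau),\chi'(\rho))$. On the right, $\eta=\partial\mathfrak{a}\cup_1\mathfrak{a}'-\mathfrak{a}\cup_2\partial\mathfrak{a}'$ and both cup products have already been tabulated, so the first step is to assemble the table of $\eta(\sigma,\tau,\rho)$ by taking the (multiplicative) quotient of the two tables entry by entry. On the left, since $\eps$ depends only on $\ct$, $\chi'(\tau)$, $\chi'(\rho)$, its coboundary is
\[
(\partial\eps)(\sigma,\tau,\rho)=\frac{\sigma(\eps(\tau,\rho))\,\eps(\sigma,\tau\rho)}{\eps(\sigma\tau,\rho)\,\eps(\sigma,\tau)},
\]
so for each case it remains to read off the four factors from~\eqref{epsilon1} and simplify.

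A cheap reduction handles most cases at once: whenever $\chi'(\rho)=1$, both cup products are trivial, so $\eta=1$; and then $\eps(\tau,\rho)=\eps(\sigma\tau,\rho)=1$ while $\eps(\sigma,\tau\rho)=\eps(\sigma,\tau)$ because $\chi'(\tau\rho)=\chi'(\tau)$, whence $\partial\eps=1$ as well. This leaves only the cases $\chi'(\rho)=-1$, which I would split further according to $\chi'(\tau)\in\{\pm1\}$ and the values of $\cs,\ct\in\{\cv{0},\cv{1},\cv{2},\cv{3}\}$; here the hypothesis $e_1\in K$ keeps $T_1$ Galois-fixed and makes $\{T_2,T_3\}$ a single orbit (unless $e_2,e_3\in K$, a degenerate and easier subcase). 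The two algebraic inputs driving the remaining verification are, first, the transformation rules of Remark~\ref{pijorbs}, which evaluate the delicate factor $\sigma(\eps(\tau,\rho))$: when $\eps(\tau,\rho)$ is one of $p_1$ or $p_{1j}$, the action of $\sigma$ permutes indices via $\sigma(p_{ij})=\sigma_s(p_{\sigma\cdot i,\sigma\cdot j})$ and $\sigma(p_i)=\sigma_s(p_{\sigma\cdot i})$; and second, the conic relation $H_i(\mathfrak{q}_i)=0$ from~\eqref{hi}, which after clearing denominators reads $\beta_j(\Gamma_j^*)^2-\beta_k(\Gamma_k^*)^2=s_{kj}$ and thereby lets me collapse the products of the $p_{ij}$ and their Galois conjugates produced by $\partial\eps$ — which express the $s$-quantities as norms, by~\eqref{pij} — into the terms $s_1,s_{1j},s_{j1}$ appearing in $\eta$.

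I expect the main obstacle to be bookkeeping rather than any conceptual difficulty: tracking the index permutation and the fixed square roots $\sqrt{\beta_i}$ inside $\sigma(\eps(\tau,\rho))$ when $\sigma$ swaps $e_2$ and $e_3$, and getting every sign right — in particular reproducing the single $-1$ entry of the $\partial\mathfrak{a}\cup_1\mathfrak{a}'$ table and the sign built into $\beta_j(\Gamma_j^*)^2-\beta_k(\Gamma_k^*)^2=s_{kj}$. I would present the verification as one table ranging over the surviving cases, each row displaying the four $\eps$-factors, their product after applying Remark~\ref{pijorbs} and the conic relation, and the matching entry of $\eta$, so that the equality $\partial\eps=\eta$ can be checked line by line.
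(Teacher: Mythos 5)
Your proposal is correct and takes essentially the same route as the paper: both prove $\partial\eps=\eta$ by a direct case analysis whose only nontrivial inputs are the Galois-equivariance of the $p_{ij}$ from Remark~\ref{pijorbs} and the norm identity $p_{jk}\,\overline{p_{jk}}=\beta_j(\Gamma_j^*)^2-\beta_k(\Gamma_k^*)^2=s_{kj}$ coming from $H_i(\mathfrak{q}_i)=0$, together with the cheap dispatch of the case $\chi'(\rho)=1$. The only difference is bookkeeping: the paper first exploits the ratio identity $p_1=\eps(\ct,1,-1)/\eps(\ct,-1,-1)$ of~\eqref{ratioeqn} to remove the dependence on $\chi'(\sigma)$ and to fold the $\chi'(\tau)=-1$ cases into the $\chi'(\tau)=1$ ones, so that only the two short tables in the Appendix need explicit verification, whereas you enumerate all surviving cases in a single larger table.
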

\begin{proof}
	If $\chi'(\rho)=1$, then $$\partial\eps(\sigma,\tau,\rho)=\frac{\sigma\eps(\ct,\chi'(\tau),1)\ \eps(\cs,\chi'(\sigma),\chi'(\tau))}{\eps(\chi(\sigma\tau),\chi'(\sigma\tau),1)\ \eps(\cs,\chi'(\sigma),\chi'(\tau))}=1=\eta(\sigma,\tau,\rho).$$
        Therefore, we assume that $\chi'(\rho)=-1$ and
	observe that
    \begin{equation}\label{ratioeqn}p_1=\frac{1}{\eps(\cv{0},-1,-1)}=\frac{\eps(\ct,1,-1)}{\eps(\ct,-1,-1)},
        \end{equation}
        for all values of $\ct$.
        Using this for $\chi'(\rho)=-1$ and $\chi'(\sigma)=1$ we have
	$$\restr{\partial\eps}{\chi'(\tau)=-1}=\frac{\sigma\eps(\ct,-1,-1)\ \eps(\cs,1,1)}{\eps(\chi(\sigma\tau),-1,-1)\ \eps(\cs,1,-1)}=(\partial\mathfrak{a}\cup\mathfrak{a}')(\sigma,\tau)\Gamma(\sigma),$$
	where $$\Gamma(\sigma)\coloneqq \frac{p_1}{\sigma(p_1)\ \eps(\cs,1,-1)^2}.$$
	Remark \ref{pijorbs} implies that $\sigma(p_1)=\sigma_s(t_1)$ and hence $\Gamma(\sigma)$
	depends only on $\chi(\sigma)$. Therefore, it is
	enough to show that $\Gamma(\sigma)=
        1/\mathfrak{a}\cup\partial\mathfrak{a}'(\cs,-1,-1),$
	for $\sigma\in G_{K(E[2])}$
     (see Appendix (Table \ref{table1})
	for explicit verification).

	Now we show that
	$\restr{\partial\eps}{\chi'(\sigma)=1}=\restr{\partial\eps}{\chi'(\sigma)=-1}.$
If $\chi'(\sigma)=-1$
	and $\chi'(\tau)=1$, then we have
	\begin{align*}
		\partial\eps(\sigma,\tau,\rho)=\frac{\sigma\eps(\ct,1,-1)\ \eps(\cs,-1,-1)}{\eps(\chi(\sigma\tau),-1,-1)\ \eps(\cs,-1,1)}&=\frac{\sigma\eps(\ct,1,-1)\ \eps(\cs,1,-1)}{\eps(\chi(\sigma\tau),1,-1)\ \eps(\cs,-1,1)}\\
		&=\restr{\partial\eps}{\chi'(\sigma)=1,\chi'(\tau)=1}(\sigma,\tau,\rho)\tag{using \eqref{ratioeqn}}.
\end{align*}
	If $\chi'(\tau)=\chi'(\sigma)=-1$, then we have
        \begin{align*}
		\partial\eps(\sigma,\tau,\rho)=\frac{\sigma\eps(\ct,-1,-1)}
		{\eps(\chi(\sigma\tau),1,-1)\ \eps(\cs,-1,-1)}
		&=\frac{\sigma\eps(\ct,-1,-1)}{\eps(\chi(\sigma\tau),-1,-1)\ \eps(\cs,1,-1)}\\
		&=\restr{\partial\eps}{\chi'(\sigma)=1,\chi'(\tau)=-1}(\sigma,\tau,\rho)\tag{using \eqref{ratioeqn}}.
\end{align*}

What is left now to show is that if
 $\chi'(\sigma)=1$, then
\begin{equation}\label{epseqeta}
\restr{\partial\eps}{\chi'(\tau)=1}(\sigma,\tau,\rho)
                        =\restr{\eta}{\chi'(\tau)=1}(\sigma,\tau,\rho)=\partial\mathfrak{a}\cup\mathfrak{a}'(\cs,\ct,-1).
 \end{equation}
       We observe that
	\begin{equation}\label{act_sig_eps}
		\sigma(\eps(\ct,\chi'(\tau),\chi'(\rho)))=
	\sigma_s\eps(\sigma\cdot\ct,\chi'(\tau),\chi'(\rho)),
	\end{equation}
	Remark \ref{pijorbs} implies \eqref{act_sig_eps}
as the values of
	$\eps$ are multiplicative combinations of $p_{1j}$ and
	$\sigma(p_{1j})=\sigma_s(p_{1\sigma\cdot j})$.
	This implies (assuming $\chi'(\sigma)=1$):
	\begin{align*}
	\restr{\partial\eps(\sigma,\tau,\rho)}{\chi'(\tau)=1,\chi'(\rho)=-1}&=
	\frac{\sigma\eps(\chi(\tau),1,-1)\eps(\chi(\sigma),1,-1)}
	{\eps(\chi(\sigma\tau),1,-1)}\\
	&=\frac{\sigma_s\eps(\sigma\cdot\chi(\tau),1,-1)\eps(\chi(\sigma),1,-1)}
	{\eps(\chi(\sigma)\sigma\cdot\chi(\tau),1,-1)}.
	\end{align*}
	Therefore, for  $\sigma,\tau\in G_K$ such that $\chi(\sigma)=\cv{i}$ and
	$\chi(\tau)=\sigma^{-1}\cdot\cv{j}$ respectively,
	$\restr{\partial\eps}{\chi'(\tau)=1,\chi'(\rho)=-1}(\sigma,\tau,\rho)$ takes the
	same value, which is similar to
	$\partial \mathfrak{a}\cup\mathfrak{a}'$.
	Hence it is enough to
	verify equation \eqref{epseqeta} assuming $\sigma,\tau\in G_{K(E[2])}$
along with  $\chi'(\sigma)=1=\chi'(\tau)=1$ and $\chi'(\rho)=-1$ (see Appendix
(Table \ref{table2}) for explicit verification).
\hfill
\end{proof}

The next subsection is dedicated to the local part of the computation of CTP using
$\eps$ obtained from
the global part.

\subsection{Local computation}\label{localcomputation}

We recall the assumption that $T_1$ (hence $\beta'$) is defined over $K_v$.
Using the local triviality of $\alpha$, for
each place $v$ of $K$
there exists a $P_v\coloneqq (x_v,y_v)\in E(\overline{K_v})$ such that
$\partial P_v(\sigma)=(\sigma-1)P_v=\alpha_v(\sigma)$ for all $\sigma\in
G_{K_v}$.
This implies that  for $\sigma\in G_{K_v}$,
$\sigma(P_v)=P_v$ if $\cs=\cv{0}$ and $\sigma(P_v)=P_v+T_i$ when $\cs=\cv{i}$, and hence
$P_v$ lies in a subfield  of $K_v(\sqrt{\beta_1},\sqrt{\beta_2})$.
Lifting $P_v$ to a degree zero divisor $\mathfrak{b}_v=(P_v)-(T_0)$, we have
\begin{equation}
(\mathfrak{a}_v-\partial\mathfrak{b}_v)(\sigma) =\begin{cases}
& 0=\div(1), \hfill \cs=\cv{0}\\
& (T_i)-(P_v+T_i)+(P_v)-(T_0)=\div\left(\frac{y-\frac{y_v(x-e_i)}{x_v-e_i}}{x-x(P_v+T_i)}\right),\  \hfill \cs=\cv{i}.\\
\end{cases}
\end{equation}
For $1\leq i\leq 3$, let $x_{v,i}$, $\theta_{v,i}$ and $\omega_{v,i}$ denote the
quantities $x_v-e_i$, $\frac{y_v}{x_v-e_i}$ and
$-\theta_{v,i}/p_{jk}$ respectively.
This gives:
\begin{equation}
(\mathfrak{a}_v-\partial\mathfrak{b}_v)\cup_1 \mathfrak{a}_{v}'(\tau,\rho) =\begin{cases}
	& 1 \hfill \substack{\ct=\cv{0}\ \mathrm{or}\ \chi'(\rho)=1},\\
\vspace{.1in}
	&z_{v,11} \ \ \ \ \ \  \hfill \substack{\ct=\cv{1},\ \chi'(\rho)=-1}\\
\vspace{.1in}
	& z_{v,1j}\ \ \ \ \ \  \hfill \substack{\ct=\cv{j},\ \chi'(\rho)=-1},
\end{cases}
\end{equation}
where using equations \eqref{xtieiandy} and \eqref{ypxpij} we have
\begin{align*}
        z_{v,11}&=\left(\frac{\left(y-\frac{y_v(x-e_1)}{x_v-e_1}\right)(-\frac{y}{x-e_1})}
        {x-x(P_v+T_1)}\right)(T_1)\times\left(\frac{x-x(P_v+T_1)}
        {\left(y-\frac{y_v(x-e_1)}{x_v-e_1}\right)(x/y)}\right)\left(T_0\right)=x_{v,1}
\end{align*}
and
\begin{align*}
        z_{v,1j} &=\left(\frac{\left(y-\frac{y_v(x-e_j)}{x_v-e_j}\right)}{x-x(P_v+T_j)}\right)(T_1)\times\left(\frac{x-x(P_v+T_j)}{\left(y-\frac{y_v(x-e_j)}{x_v-e_j}\right)(x/y)}\right)\left(T_0\right)=-\theta_k
\end{align*}
Further,
\begin{equation}
	\mathfrak{b}_v\cup_2\partial\mathfrak{a}'_{v}(\tau,\rho)=\begin{cases}
	&1 \hfill\chi'(\tau)=1\ \mathrm{or}\ \chi'(\rho)=1,\\
\vspace{.1in}
	&x_{v,1}    \ \ \ \ \ \ \ \ \hfill \chi'(\tau)=\chi'(\rho)=-1,
\end{cases}
\end{equation}
and $\gamma_v\coloneqq (\mathfrak{a}_v-\partial\mathfrak{b}_v)\cup \mathfrak{a}_{v}'-\mathfrak{b}_v\cup\partial\mathfrak{a}'_{v}-\eps_v$ is given by:
\begin{equation}\label{gamma1}
        \gamma_{v}\coloneqq  \begin{cases}
                &1 \ \ \ \ \ \hfill\substack{\ct=\cv{0},\chi'_1(\tau)\chi'_1(\rho)=-1\\ \mathrm{or}\ \ct=\cv{1},\chi'_1(\tau)\chi'_1(\rho)=1\\ \mathrm{or}\ \chi'_1(\rho)=1},\\
                \vspace{0.1in}
                &x_{v,1}/p_1\ \ \ \ \ \hfill\ct=\cv{1},\ \chi'_1(\tau)=1,\ \chi'_1(\rho)=-1,\\
                \vspace{0.1in}
                &p_1/x_{v,1}\ \ \ \ \ \hfill\ct=\cv{0},\ \chi'_1(\tau)=-1,\ \chi'_1(\rho)=-1,\\
                \vspace{0.1in}
		&\omega_{v,k}\ \ \ \ \ \hfill\ct=\cv{j},\ \chi'_1(\tau)=1,\ \chi'_1(\rho)=-1,\\
                \vspace{0.1in}
		&1/\omega_{v,k}\ \ \ \ \ \hfill\ct=\cv{k},\ \chi'_1(\tau)=-1,\ \chi'_1(\rho)=-1,
 \end{cases}
\end{equation}
where $j,k\ne 1$
are distinct.

We discuss some properties of $\omega_{v,i}$ in order to
determine the class $c_v$ in $\Br(K_v)$ represented by $\gamma_v$.
Here we digress
from the assumption that $e_1\in K_v$.
Note that $\omega_{v,i}$ 
can be defined independently of this assumption.
For $\sigma\in G_{K_v}$, satisfying $\cs=\cv{i}$ and
$\restr{\sigma}{K_v(\sqrt{\beta_i})}=\id$, and
using equation \eqref{xtieiandy} we have
\begin{equation*}
        \sigma(\omega_{v,i})=\sigma\left(\frac{-y_v}{x_{v,i}p_{jk}}\right) = \frac{y(P_v+T_i)}{p_{jk}(x(P_v+T_i)-e_i)} =\frac{-y_v}{x_{v,i}p_{jk}}=\omega_{v,i}.\tag{$\because\sigma(p_{jk})=-\sigma_sp_{\sigma\cdot j,\sigma\cdot k}=-p_{jk}$.}
\end{equation*}
This implies that $\omega_{v,i}\in K_v(\sqrt{\beta_i})$.
Further, if $\sigma\in G_{K_v}$, with $\cs=\cv{j}$ and
$\restr{\sigma}{K_v(\beta_i)}=\id$
(i.e. $\restr{\sigma}{K_v(\sqrt{\beta_i})}$ is the non-trivial element 
of $\Gal(K_v(\sqrt{\beta_i})/K_v(\beta_i))$), using equation \eqref{xpplustj-ei} we have
\begin{equation*}
   \sigma(\omega_{v,i})=\sigma\left(\frac{-y_v}{x_{v,i}p_{jk}}\right) = 
    \frac{-y(P_v+T_j)}{\sigma(p_{jk})(x(P_v+T_j)-e_i)}=
    \frac{-x_{v,i}p_{jk}}{y_v}=\frac{1}{\omega_{v,i}}. 
    \tag{$\because\sigma(p_{jk})p_{jk}=s_{kj}$ (Remark \ref{pijorbs})}
\end{equation*}
Assuming $\beta_i$ is not a
square in $K_v(\beta_i)$, we have $\mathrm{Norm}_{K_v(\sqrt{\beta_i})/K_v(\beta_i)}(\omega_{v,i})=1$.
Also, if $\sigma\in G_{K_v}$ is such
that $\cs=\cv{0}$ then we have:
$\sigma(\omega_{v,i})=\omega_{v,j}$, if
$\sigma(e_i)=e_j$.
Using Hilbert's Theorem 90, we have a $h_{v,i}\in K_v(\sqrt{\beta_i})$, such that
$\omega_{v,i}=\overline{h_{v,i}}/h_{v,i}$, where $x\mapsto\overline{x}$, represents
the non-trivial automorphism of $K_v(\sqrt{\beta_i})$ over $K_v(\beta_i)$.
We choose $h_{v,i}$ to be $1+\overline{\omega_{v,i}}\in K_v(\sqrt{\beta_i})$,
and define:
\begin{equation}\label{delta}
	\delta_{v,i}\coloneqq h_{v,i}\overline{h_{v,i}}=2+\overline{\omega_{v,i}}+
	\omega_{v,i}\in K_v(\beta_i)^{{\times}}.
\end{equation}
In the view of the above
we have the following remark:
\begin{remark}\label{omegai}
    Let $\delta_{v,i}\in K_v(e_i)^\times$ be as above.
    Then $\sigma(\delta_{v,i})=\delta_{v,j}$
     if $\sigma(e_i)=e_j$ for
     $\sigma\in G_{K_v}$.
     Therefore,
      we get:
    $\prod\limits_{i=1}^3\delta_{v,i}\in K_v^\times$
    and
    $\delta_{v,i}'\coloneqq \delta_{v,j}\delta_{v,k}\in K_v(e_i)^\times$.
    \end{remark}

    We return to our
    discussion in the case when $e_1\in K_v$, we have:
$\delta_{v,1}'\in K_v^{\times}$.
We shift $\gamma_{v}$ by the coboundary $\partial\xi_{v}$ where:
\begin{equation}
        \xi_{v}(\tau) =\begin{cases}
                &1 \  \ \ \ \ \ \ \hfill \chi'(\tau)=1,\\
                \vspace{.1in}
                & \overline{h_{v,2}h_{v,3}} \ \ \ \ \ \ \hfill \chi'(\tau)=-1.
        \end{cases}
\end{equation}
If $\gamma'_{v}\coloneqq \gamma_{v}-\partial\xi_{v}$, then we have:
\begin{equation}
        \gamma'_{v}(\tau,\rho)=\begin{cases}
                &1\ \ \ \ \ \ \ \hfill \chi'(\tau)=1\ \mathrm{or}\ \chi'(\rho)=1,\\
                \vspace{.1in}
		&\frac{1}{\delta_{v,1}'}\in K_v^\times \ \ \ \ \ \ \hfill\chi'(\tau)=\chi'(\rho)=-1,
        \end{cases}
\end{equation}
and that $\gamma'_{v}$ also represents the class
$c_{v}\in\Br(K_v)$. The following proposition implies that $c_{v}$ is the class of
the quaternion algebra $(\delta'_{v,1},\beta')$ and therefore
	$(-1)^{2\inv_{K_v}(c_{v})}=(\delta_{v,1}',\beta')_{K_v}$.

\begin{proposition}\label{classofquaternion}
If $d_1,d_2\in K_v^\times$, then the 
2-cocycle $z$ given by $(\sigma,\tau)\mapsto 1$ if $\sigma(\sqrt{d_2})=\sqrt{d_2}$ or $\tau(\sqrt{d_2})=\sqrt{d_2}$, and 
$(\sigma,\tau)\mapsto d_1$, if $\sigma(\sqrt{d_2})=\tau(\sqrt{d_2})=-\sqrt{d_2}$
    represents the class of the quaternion algebra $(d_1,d_2)$ in $\Br(K_v)$.
\end{proposition}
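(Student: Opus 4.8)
The plan is to recognize $z$ as the inflation of the standard crossed-product cocycle attached to the quaternion (cyclic) algebra $(d_1,d_2)$. First I would dispose of the split case: if $d_2\in(K_v^\times)^2$ then $\sqrt{d_2}\in K_v$, so every $\sigma\in G_{K_v}$ fixes $\sqrt{d_2}$ and $z\equiv 1$; on the other side the quaternion algebra $(d_1,d_2)$ is split, so both represent the zero class in $\Br(K_v)$ and nothing more is needed.

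From now on assume $d_2\notin(K_v^\times)^2$ and put $L\coloneqq K_v(\sqrt{d_2})$, a quadratic extension with $\Gal(L/K_v)=\{\id,\bar\sigma\}$, where $\bar\sigma(\sqrt{d_2})=-\sqrt{d_2}$. By construction $z(\sigma,\tau)$ depends only on whether $\sigma$ and $\tau$ fix or negate $\sqrt{d_2}$, hence only on the images of $\sigma,\tau$ in $\Gal(L/K_v)$; moreover all of its values lie in $K_v^\times\subset L^\times$. Therefore $z$ is the inflation of the $2$-cocycle $\bar z\in\cZ^2(\Gal(L/K_v),L^\times)$ determined by $\bar z(\id,\id)=\bar z(\id,\bar\sigma)=\bar z(\bar\sigma,\id)=1$ and $\bar z(\bar\sigma,\bar\sigma)=d_1$. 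This $\bar z$ is precisely the usual $2$-cocycle of the cyclic algebra $(L/K_v,\bar\sigma,d_1)$: it is the crossed-product cocycle whose associated central simple $K_v$-algebra is $L\oplus L\,u$ with $u^2=d_1$ and $u\,x=\bar\sigma(x)\,u$ for $x\in L$, and a direct check of the relations shows this algebra is the quaternion algebra $(d_1,d_2)$ (take $u=i$, $\sqrt{d_2}=j$).

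It remains to pass from $\bar z$ to $z$. By Hilbert~90 the inflation map $\H^2(\Gal(L/K_v),L^\times)\to\H^2(G_{K_v},\Gm)=\Br(K_v)$ is injective with image the relative Brauer group $\Br(L/K_v)$, and it carries the class of a crossed-product cocycle to the class of the corresponding central simple algebra. Hence $[z]$, being the inflation of $[\bar z]$, is the class of $(d_1,d_2)$, as claimed. The only genuine points to watch are bookkeeping ones: that $z$ really is a cocycle (automatic, since it is an inflation of $\bar z$) and that the nontrivial value is normalized to sit at $(\bar\sigma,\bar\sigma)$ with value $d_1$ rather than $d_1^{-1}$, which amounts to fixing the orientation of $\bar\sigma$ once and for all. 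As a cross-check, and to connect with the invariant $\inv_{K_v}$ used immediately afterwards, one may instead verify by a short cochain manipulation that $z$ is cohomologous in $\H^2(G_{K_v},\Gm)$ to the image under $\mu_2\hookrightarrow\Gm$ of the cup product $(d_1)\cup(d_2)$ of the two Kummer classes in $\H^1(G_{K_v},\mu_2)$; since this cup product is the standard Galois-cohomological description of the quaternion class, it yields the same conclusion.
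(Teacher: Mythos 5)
Your argument is correct, but it takes a genuinely different route from the paper. You identify $z$ as the inflation of the crossed-product cocycle of the cyclic algebra $(L/K_v,\bar\sigma,d_1)$ with $L=K_v(\sqrt{d_2})$, check that this cyclic algebra is the quaternion algebra $(d_1,d_2)$, and then use the injectivity and compatibility of the inflation map $\H^2(\Gal(L/K_v),L^\times)\to\Br(K_v)$. The paper instead starts from the $\mu_2$-valued cocycle $x$ with $x(\sigma,\tau)=-1$ exactly when $\sigma(\sqrt{d_1})=-\sqrt{d_1}$ and $\tau(\sqrt{d_2})=-\sqrt{d_2}$, which by \cite[\S XIV.2, Proposition 5]{locfldserre} (equivalently, the cup product of the two Kummer classes) represents $(d_1,d_2)$, and then exhibits the explicit $1$-cochain $y(\sigma)=1$ or $1/\sqrt{d_1}$ according as $\sigma$ fixes $\sqrt{d_2}$ or not, verifying $z=x-\partial y$ by a short case check; this is essentially the ``cross-check'' you mention at the end but do not carry out. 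Your route is more structural and makes the link to the standard crossed-product presentation of cyclic algebras transparent, at the cost of importing the identification of $\H^2(\Gal(L/K_v),L^\times)$ with $\Br(L/K_v)$ via crossed products; the paper's route is a self-contained cochain computation needing only Serre's normalization of the Hilbert-symbol cocycle, which also fits better with the invariant computations that follow. On your bookkeeping worry about $d_1$ versus $d_1^{-1}$: since quaternion classes are $2$-torsion in $\Br(K_v)$, any such convention mismatch is harmless here, and your separate treatment of the split case $d_2\in(K_v^\times)^2$ is likewise unnecessary but does no harm.
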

\begin{proof}
    \cite[\S XIV.2, Proposition 5]{locfldserre} implies that 
    the cocycle 
    \[
    x(\sigma,\tau)\coloneqq \begin{cases}
        &1 \ \ \ \ \hfill \text{if }\sigma(\sqrt{d_1})=\sqrt{d_1}\text{ or }\tau(\sqrt{d_2})=\sqrt{d_2},\\
        &-1 \ \ \ \ \hfill\text{otherwise}.
    \end{cases}
    \]
    represents the class of quaternion algebra $(d_1,d_2)$. Now it can be checked that $z=x-\partial y$ where
    \[
        y(\sigma)\coloneqq \begin{cases}
        &1 \hfill \ \ \ \ \hfill \sigma(\sqrt{d_2})=\sqrt{d_2}\\
        &1/\sqrt{d_1} \ \ \ \hfill\sigma(\sqrt{d_2})=-\sqrt{d_2}.
        \end{cases}
    \]
\end{proof}
We now express $\delta_{v,i}$ in terms of $x(Q_v)$, and
$y(Q_v)$, where $Q_v\coloneqq 2P_v\in E(K_v)$.
\begin{align*}
        x(Q_v)-e_i &= \left(\frac{3x_v^2+c}{2y_v}\right)- (x_v-e_j)-(x_v-e_k)\\
                &= \frac{1}{4}\left(\sum\limits_{i=1}^3\theta_{v,i}\right)^2-\theta_{v,i}\theta_{v,k}-\theta_{v,i}\theta_{v,j}= \frac{1}{4}\left(\theta_{v,j}+\theta_{v,k}-\theta_{v,i}\right)^2.
\end{align*}
There exists $w_{v,i}\in K_v(e_i)^\times$, such that $x(Q_v)-e_i= \beta_iw_{v,i}^2$,
hence we have $\theta_{v,i}=w_{v,j}\sqrt{\beta_j}+w_{v,k}\sqrt{\beta_k}$ and
$$\omega_{v,i}=-\frac{\theta_{v,i}}{p_{jk}}=-\frac{w_{v,j}\sqrt{\beta_j}+w_{v,k}\sqrt{\beta_k}}{\Gamma_j^*\sqrt{\beta_j}+\Gamma_k^*\sqrt{\beta_i}}.$$
Here
$w_{v,i}$ are chosen to be conjugates over
$K_v$ if $e_i$
 are.
Therefore,
\begin{equation}\label{deltavi}
        \delta_{v,i}= 2\left(1-\frac{\beta_jw_{v,j}\Gamma_j^*-\beta_kw_{v,k}\Gamma_k^*}{\beta_j(\Gamma_j^*)^2-\beta_k(\Gamma_k^*)^2}\right)=2\left(1+\frac{\beta_kw_{v,k}\Gamma_k^*-\beta_jw_{v,j}\Gamma_j^*}{s_{kj}}\right).
\end{equation}
A value of $\langle a, a'\rangle_\ctp$ (depending on the choices made above) is then given by the following theorem.
\begin{theorem}\label{condvalctp}
In the view of the above discussion and the choice of
the point $P_v$ made above, the value of CTP on $(a,a')\in\Sel^2(E)\times
    \H^1(G_K, \langle T_1\rangle)$ is equal to:
$$\langle a,a'\rangle_\ctp=\prod\limits_v(\delta_{v,i}',\beta')_{K_v}.$$
\end{theorem}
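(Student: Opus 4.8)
The plan is to assemble the local computation just carried out into the global formula; the only genuine content is the bookkeeping that converts the $\Q/\Z$-valued sum of Definition~\ref{def_CTP} into the $\{\pm1\}$-valued product of Hilbert symbols. First I would recall that, by Definition~\ref{def_CTP},
\[
\langle a,a'\rangle_\ctp = \sum_v \inv_{K_v}(c_v),
\]
where $c_v\in\Br(K_v)$ is the class of $\gamma_v$ from~\eqref{gamma1}. The paragraphs preceding the theorem showed that, after subtracting the coboundary $\partial\xi_v$, the class $c_v$ is equally represented by the cocycle $\gamma'_v$, which is trivial except when $\chi'(\tau)=\chi'(\rho)=-1$, where it takes the value $1/\delta_{v,1}'$; here $\delta_{v,1}'\in K_v^\times$ by Remark~\ref{omegai}, since $e_1\in K_v$.

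Next I would invoke Proposition~\ref{classofquaternion} with $d_1=1/\delta_{v,1}'$ and $d_2=\beta'$. Since $\chi'(\tau)=-1$ is precisely the condition $\tau(\sqrt{\beta'})=-\sqrt{\beta'}$, the cocycle $\gamma'_v$ is exactly the cocycle of that proposition, so $c_v$ is the class of the quaternion algebra $(1/\delta_{v,1}',\beta')$. Because quaternion classes are $2$-torsion in $\Br(K_v)$, this equals the class of $(\delta_{v,1}',\beta')$, and in particular $\inv_{K_v}(c_v)\in\{0,\tfrac12\}\subset\Q/\Z$. The Hilbert-symbol relation recalled in Section~\ref{notations}, namely $(-1)^{2\inv_{K_v}([(x,y)])}=(x,y)_{K_v}$, then yields
\[
(-1)^{2\inv_{K_v}(c_v)} = (\delta_{v,1}',\beta')_{K_v}.
\]

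Finally I would globalize. As $\inv_{K_v}(c_v)\in\tfrac12\Z/\Z$ for every~$v$ and vanishes for all but finitely many~$v$ (so the product below is finite), applying the standard isomorphism $\tfrac12\Z/\Z\xrightarrow{\sim}\{\pm1\}$, $x\mapsto(-1)^{2x}$, gives
\[
(-1)^{2\langle a,a'\rangle_\ctp} = \prod_v (-1)^{2\inv_{K_v}(c_v)} = \prod_v(\delta_{v,1}',\beta')_{K_v},
\]
which is the claimed identity, the index $i$ being that of the chosen point $T_1$, i.e.\ $i=1$. The explicit expression~\eqref{deltavi} for $\delta_{v,i}$ in terms of $x(Q_v)$ and $y(Q_v)$ confirms that each $\delta_{v,1}'$ is a genuine element of $K_v^\times$, so every symbol on the right is defined, and by Remark~\ref{ctpnotwd} the resulting value is one of the possible values of the pairing, reflecting the choice of~$P_v$.

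I expect the only real subtlety to be the two identifications used silently: that the cocycle value $1/\delta_{v,1}'$ represents the same Brauer class as $(\delta_{v,1}',\beta')$, which is valid \emph{only} because the class is $2$-torsion, and that the passage from the $\Q/\Z$-valued definition to the $\{\pm1\}$-valued product is legitimate, which again rests on each local invariant lying in $\tfrac12\Z/\Z$. Everything else is the direct substitution of the already-completed local computation.
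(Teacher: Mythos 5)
Your proposal is correct and follows essentially the same route as the paper: identify $c_v$ via $\gamma'_v$ and Proposition~\ref{classofquaternion} as the class of the quaternion algebra $(\delta_{v,1}',\beta')$ (the passage from $1/\delta_{v,1}'$ to $\delta_{v,1}'$ being harmless since the class is $2$-torsion), then sum the local invariants and translate into Hilbert symbols via $x\mapsto(-1)^{2x}$. Your explicit remark that the stated equality is really the identity $(-1)^{2\langle a,a'\rangle_\ctp}=\prod_v(\delta_{v,1}',\beta')_{K_v}$ under the identification $\tfrac12\Z/\Z\cong\{\pm1\}$, with $i=1$, is a correct and welcome clarification of the paper's notation.
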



\section{Computing CTP on $\Sel^2(E)\times\Sel^2(E)$}
Our main aim in this section is to prove the sufficiency of the
computation done in the previous section.
\subsection{The corestriction method}\label{cor_method}
Let $a'\in \Sel^{(2)}(E)$ be represented by the 1-cocycle $\alpha'$
which corresponds to the triple $(\beta_1',\beta_2',\beta_3')$ as in section
\ref{notations},
and we drop the subscript in $\chi_{a'}$ and call it $\chi'$. Note
that this $\chi'$
is not the same as the one in the previous section.
We choose a lift of
$\alpha'$ to $\cC^1(G_K,\Div^0(E))$ as 
\[
\mathfrak{a}'(\sigma)=\begin{cases}
	&0 \ \ \ \ \ \ \ \ \ \hfill\csd=\cv{0},\\
	\vspace{.1in}
	&(T_j)+(T_k)-2(T_0) \ \ \ \ \ \ \csd=\cv{i}.
\end{cases}
\]
Note that the choice of lift $\frak{a}'$ of $\alpha'$ is different from the choice of lift
$\frak{a}$ of $\alpha$ in \S \ref{GlobComp}.
The following lemma implies that
$\mathfrak{a}'$ can be written  as a sum of corestrictions
of certain cochains.
\begin{lemma}\label{corestrionlemma}
Let $\mathfrak{a}'$ be as
above, and let
$\Delta_1,\ldots,\Delta_n$ be the different orbits of $\Delta$ with representatives 
    $T_1,\ldots, T_n$ for
$n\leq 3$. Then we have:
$$\mathfrak{a}'=\sum\limits_{i=1}^n\cor(\mathfrak{t_i}),$$
where $t_i\in\cC^1(G_{K(e_i)},\Div^0(E))$ is given by:
\[
    \mathfrak{t}_i(\sigma)\coloneqq \begin{cases}
    &0 \  \ \ \ \hfill \sigma(\sqrt{\beta_i'})=\sqrt{\beta_i'},\\
    &(T_i)-(T_0)\  \ \ \ \hfill \sigma(\sqrt{\beta_i'})=-\sqrt{\beta_i'},
    \end{cases}
\]
and the corestriction of $\mathfrak{t}_i$ is taken with respect to the groups
    $G_{K(e_i)}$ and $G_{K}$.
\end{lemma}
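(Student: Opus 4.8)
The plan is to evaluate the right-hand side $\sum_{i=1}^n\cor(\mathfrak{t}_i)$ directly by means of the explicit cochain-level corestriction formula \eqref{corbarres}, and to match it with $\mathfrak{a}'$ coefficient by coefficient in $\Div^0(E)$. (At the level of cohomology classes this identity is just the Shapiro-lemma decomposition of $\H^1(G_K,E[2])\hookrightarrow\bigoplus_{\orbs}\H^1(G_{K(e_i)},\mu_2)$; the content of the lemma is that, for the chosen lifts, it already holds at the level of cochains.) For $1$-cochains, \eqref{corbarres} reads $\cor^{G_K}_{G_{K(e_i)}}(\mathfrak{t}_i)(\sigma)=\sum_{g\in R_i}g^{-1}\mathfrak{t}_i(m(g\sigma))$. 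First I would fix, for each orbit representative $T_i$, a system $R_i$ of right coset representatives of $G_{K(e_i)}$ in $G_K$. Since $G_{K(e_i)}$ is the stabilizer of $e_i$ (equivalently of $T_i$), the assignment $g\mapsto g^{-1}(T_i)$ is a bijection from $R_i$ onto the orbit $\Delta_i$; I index the representatives accordingly, writing $g_{(m)}$ for the one with $g_{(m)}^{-1}(T_i)=T_m$, so that $g_{(i)}=1$. Because $T_0$ is Galois-fixed and $g_{(m)}^{-1}\big((T_i)-(T_0)\big)=(T_m)-(T_0)$, the formula collapses to
$$\cor(\mathfrak{t}_i)(\sigma)=\sum_{T_m\in\Delta_i}\big[\,m(g_{(m)}\sigma)(\sqrt{\beta_i'})=-\sqrt{\beta_i'}\,\big]\cdot\big((T_m)-(T_0)\big),$$
where $[\,\cdot\,]$ equals $1$ when the bracketed condition holds and $0$ otherwise, and summing over the disjoint orbits reassembles a sum over all of $\Delta$.

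The heart of the argument is to identify this indicator condition. I would first pin down $m(g_{(m)}\sigma)$: two elements of $G_K$ lie in the same right coset of $G_{K(e_i)}$ iff their inverses send $e_i$ to the same element, and $(g_{(m)}\sigma)^{-1}(e_i)=\sigma^{-1}(e_m)=e_{\sigma^{-1}\cdot m}$, so $r(g_{(m)}\sigma)=g_{(\sigma^{-1}\cdot m)}$ and hence $m(g_{(m)}\sigma)=g_{(m)}\,\sigma\, g_{(\sigma^{-1}\cdot m)}^{-1}\in G_{K(e_i)}$. I would then normalize the square roots compatibly with the representatives, taking $\sqrt{\beta_m'}:=g_{(m)}^{-1}(\sqrt{\beta_i'})$ for every $m$ in the orbit of $i$; this is a legitimate choice of the data defining $\chi'=\chi_{a'}$ (hence $\mathfrak{a}'$). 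With it a short computation gives
$$\frac{m(g_{(m)}\sigma)(\sqrt{\beta_i'})}{\sqrt{\beta_i'}}=\frac{g_{(m)}\,\sigma\big(g_{(\sigma^{-1}\cdot m)}^{-1}\sqrt{\beta_i'}\big)}{\sqrt{\beta_i'}}=\frac{g_{(m)}\,\sigma(\sqrt{\beta_{\sigma^{-1}\cdot m}'})}{\sqrt{\beta_i'}}=\chi'_m(\sigma),$$
where $\chi'_m(\sigma)$ is the $m$-th component of $\chi'(\sigma)\in\mu_2^\Delta$, using $\sigma(\sqrt{\beta_{\sigma^{-1}\cdot m}'})=\chi'_m(\sigma)\sqrt{\beta_m'}$ and $g_{(m)}(\sqrt{\beta_m'})=\sqrt{\beta_i'}$. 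Thus the coefficient of $(T_m)-(T_0)$ in $\sum_i\cor(\mathfrak{t}_i)(\sigma)$ is $1$ exactly when $\chi'_m(\sigma)=-1$, and $0$ otherwise.

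Finally I would compare with $\mathfrak{a}'$. From the description of the chosen lift one has $\mathfrak{a}'(\sigma)=\sum_{m:\,\chi'_m(\sigma)=-1}\big((T_m)-(T_0)\big)$: indeed $\chi'(\sigma)$ lands in the image of $E[2]\hookrightarrow\mu_2^\Delta$, so it equals one of $\cv{0},\cv{1},\cv{2},\cv{3}$, and the cases $\mathfrak{a}'(\sigma)=0$ (for $\cv{0}$) and $\mathfrak{a}'(\sigma)=(T_j)+(T_k)-2(T_0)$ (for $\cv{i}$) are exactly the empty sum and the sum over the two indices with $\chi'_m(\sigma)=-1$. Since $(T_1)-(T_0),(T_2)-(T_0),(T_3)-(T_0)$ are $\Z$-linearly independent in $\Div^0(E)$, matching coefficients yields $\mathfrak{a}'=\sum_{i=1}^n\cor(\mathfrak{t}_i)$. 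I expect the main obstacle to be the explicit determination of $m(g_{(m)}\sigma)=g_{(m)}\,\sigma\, g_{(\sigma^{-1}\cdot m)}^{-1}$ together with the compatible normalization of the square roots that makes the sign in the displayed computation trivial; once the coset representatives are indexed by the orbit, every remaining step is routine bookkeeping.
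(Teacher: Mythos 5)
Your proof is correct and follows essentially the same route as the paper: evaluate $\cor(\mathfrak{t}_i)$ via the explicit cochain-level formula \eqref{corbarres} with coset representatives indexed by the orbit of $T_i$, identify $m(g_{(m)}\sigma)$, and read off that the coefficient of $(T_m)-(T_0)$ is governed by the component $\chi'_m(\sigma)$ of $\chi'(\sigma)\in\mu_2^\Delta$. The only (immaterial) difference is that you fix the representatives first and then re-normalize the square roots to $\sqrt{\beta_m'}=g_{(m)}^{-1}(\sqrt{\beta_i'})$, whereas the paper keeps the square roots fixed and instead chooses the coset representatives (via the decomposition $\sigma=\sigma_s\sigma_p$ of Remark~\ref{pijorbs}) so that they fix the formal sum $\sum_l\sqrt{b_{i,l}}(P_{i,l})$.
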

\begin{proof}

Let $T_i=P_{i,1},\ldots P_{i,k_i}$ be the
points in the orbit of $T_i$ and let $\beta_i'=b_{i,1},\ldots, b_{i,k_i}$ be the
    $G_K$-conjugates of $\beta_i'$ in $\{\beta_1',\beta_2',\beta_3'\}$.
Let $\{\id=\tau_{i,1},\tau_{i,2},\ldots, \tau_{i,k_i}\}$ be the
representatives of the
right cosets of $G_{K(e_i)}$ with $\tau_{i,j}(P_{i,j})=T_i$ for
$1\leq j\leq k_i$, and
$\tau_{i,j}\cdot\left(\sum\limits_{l=1}^{k_i}\sqrt{b_{i,l}}(P_{i,l})\right)=\sum\limits_{l=1}^{k_i}\sqrt{b_{i,l}}(P_{i,l})$.
To see that such a choice of coset representatives is possible we note
that any $\sigma\in G_K$ such that $\sigma(P)=T_i$ for some $P\in \Delta_i$ has the form
$\sigma_s\sigma_p$ (as in Remark \ref{pijorbs}), and so $\sigma_p$ has the required property.
Let $m,r$ be the maps as in the definition of the corestriction map.
Then $m(\tau_{i,j}\sigma)\in G_{K(e_i)}$ and hence $r(\tau_{i,j}\sigma)^{-1}(\sqrt{\beta_i'})=\sqrt{b_{i,\sigma^{-1}\cdot j}}$, where $(i,\sigma^{-1}\cdot j)=(i,l)$ if $\sigma^{-1}(P_{i,j})=P_{i,l}.$
Using equation \eqref{corbarres} and the definition of $\mathfrak{t}_i$ we have:
\begin{align*}
    \sum\limits_{i=1}^n\cor^{G_K}_{G_{K(e_i)}}(\mathfrak{t}_i)(\sigma)&=\sum\limits_{i=1}^n\sum\limits_{j=1}^{k_i}\tau_{i,j}^{-1}\mathfrak{t}_i(m(\tau_{i,j}\sigma))\\
    &=\sum\limits_{i=1}^n\sum\limits_{j=1}^{k_i} g\left(\frac{\sigma (\sqrt{b_{i,\sigma^{-1\cdot j}})}}{\sqrt{b_{i,j}}}\right)\left((P_{i,j})-(T_0)\right)
\end{align*}
where $g$ is the isomorphism $g:\mu_2\to \Z/2\Z$.
Now using the definition of $\chi'$ we have $\chi'(\sigma)=\sum\limits_{i=1}^n\sum\limits_{j=1}^{k_i}\chi_{i,j}'(\sigma)(P_{i,j})$,
where
$\chi'_{i,j}(\sigma)\coloneqq \frac{\sigma(\sqrt{b_{i,\sigma^{-1}\cdot j}})}{\sqrt{b_{i,j}}}$
 denotes the value of $\chi'(\sigma)$ at $P_{i,j}$.
Therefore,
$\sum\limits_{i=1}^n\cor^{G_K}_{G_{K(e_i)}}(\mathfrak{t}_i)=\mathfrak{a}'$.
\end{proof}
The above lemma along with Proposition \ref{rescor} part (3)
immediately gives us the following
\begin{corollary}
Let $a,a'\in\Sel^{(2)}(E)$ and $\alpha$, $\alpha'$, $\mathfrak{a}$ and $\mathfrak{a}'$ 
    be as in the definition of CTP. Assume the notations of Lemma \ref{corestrionlemma} 
    and that $\mathfrak{a}'$ is chosen as in Lemma \ref{corestrionlemma}. Then we have:
$\eta\coloneqq \partial\mathfrak{a}\cup_1\mathfrak{a}'-
    \mathfrak{a}\cup_2\partial\mathfrak{a}'=\sum\limits_{i=1}^n\cor^{G_K}_{G_{K(e_i)}}\eta_i$, 
    where
$$\eta_i\coloneqq \partial\res^{G_{K(e_i)}}_{G_K}(\mathfrak{a})\cup \mathfrak{t}_i-
    \res^{G_{K(e_i)}}_{G_K}(\mathfrak{a})\cup\partial\mathfrak{t}_i\in\zz^3(K(e_i)).$$
In particular, if $\eps_i\in \cC^2(K(e_i))$ are such that $\partial\eps_i=\eta_i$, 
    then $\eps\in\cC^2(K)$ such that $\partial\eps=\eta$ can be chosen to be 
    $\sum\limits_{i=1}^n\cor^{G_K}_{G_{K(e_i)}}(\eps_i)$.
\end{corollary}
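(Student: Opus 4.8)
The plan is to substitute the corestriction decomposition of $\mathfrak{a}'$ furnished by Lemma~\ref{corestrionlemma} into the definition of $\eta$ and then push everything through the projection formula of Proposition~\ref{rescor}(3). First I would record the two elementary facts that both $\res^{G_{K(e_i)}}_{G_K}$ and $\cor^{G_K}_{G_{K(e_i)}}$ are morphisms of complexes, hence commute with the coboundary $\partial$; in particular $\partial\mathfrak{a}'=\sum_{i=1}^n\cor^{G_K}_{G_{K(e_i)}}(\partial\mathfrak{t}_i)$. Since the cup products $\cup_1,\cup_2$ are additive in each argument, I can then expand
\[
\partial\mathfrak{a}\cup_1\mathfrak{a}'=\sum_{i=1}^n\partial\mathfrak{a}\cup_1\cor^{G_K}_{G_{K(e_i)}}(\mathfrak{t}_i),\qquad
\mathfrak{a}\cup_2\partial\mathfrak{a}'=\sum_{i=1}^n\mathfrak{a}\cup_2\cor^{G_K}_{G_{K(e_i)}}(\partial\mathfrak{t}_i).
\]

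The core step is to apply Proposition~\ref{rescor}(3) term by term, with $\gamma=\partial\mathfrak{a}$ (resp.\ $\gamma=\mathfrak{a}$), $\gamma'=\mathfrak{t}_i$ (resp.\ $\gamma'=\partial\mathfrak{t}_i$) and $L=K(e_i)$. This converts each summand into a single corestriction:
\[
\partial\mathfrak{a}\cup_1\cor(\mathfrak{t}_i)=\cor\bigl(\res(\partial\mathfrak{a})\cup_1\mathfrak{t}_i\bigr)=\cor\bigl(\partial\res(\mathfrak{a})\cup_1\mathfrak{t}_i\bigr),
\]
and likewise $\mathfrak{a}\cup_2\cor(\partial\mathfrak{t}_i)=\cor\bigl(\res(\mathfrak{a})\cup_2\partial\mathfrak{t}_i\bigr)$, where in the first line I again use that $\res$ commutes with $\partial$. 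Subtracting and using additivity of $\cor$ to collect the two pieces under a common index yields $\eta=\sum_{i=1}^n\cor^{G_K}_{G_{K(e_i)}}(\eta_i)$ with $\eta_i$ as in the statement. Here I would be careful to invoke the projection formula with the correct pairing on each term (that is, $\cup_1$ for the first and $\cup_2$ for the second), each of which is legitimate because the extended pairings $\langle\cdot,\cdot\rangle_1$, $\langle\cdot,\cdot\rangle_2$ are Galois-equivariant and everywhere defined, so that no disjointness hypothesis intervenes.

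To justify that $\eta_i\in\zz^3(K(e_i))$ I would compute $\partial\eta_i$ by the Leibniz rule for the two cup products. Using $\partial^2=0$, the terms in which $\partial$ falls on a factor already of the form $\partial(\,\cdot\,)$ vanish, leaving $\partial\eta_i=\partial\res(\mathfrak{a})\cup_1\partial\mathfrak{t}_i-\partial\res(\mathfrak{a})\cup_2\partial\mathfrak{t}_i$. Since $\partial\res(\mathfrak{a})=\res(\partial\mathfrak{a})$ and $\partial\mathfrak{t}_i$ both take values in $\Princ(E)$, Proposition~\ref{paireqondiag} shows that these two cup products agree, whence $\partial\eta_i=0$; this mirrors the verification that $\eta$ itself is a cocycle. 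The main obstacle, such as it is, lies precisely here: one must keep the bookkeeping of the two distinct pairings straight and confirm that the single place where two principal-divisor-valued cochains are paired is exactly the place where Proposition~\ref{paireqondiag} applies. Once that is settled the identity is forced, which is why the corollary is essentially immediate.

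Finally, for the ``in particular'' clause, I would choose $\eps_i\in\cC^2(K(e_i))$ with $\partial\eps_i=\eta_i$ and set $\eps\coloneqq\sum_{i=1}^n\cor^{G_K}_{G_{K(e_i)}}(\eps_i)$. Because $\cor$ commutes with $\partial$, this gives $\partial\eps=\sum_{i=1}^n\cor(\partial\eps_i)=\sum_{i=1}^n\cor(\eta_i)=\eta$, as required.
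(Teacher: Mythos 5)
Your argument is correct and is exactly the route the paper intends: the paper gives no written proof, stating only that the corollary follows immediately from Lemma~\ref{corestrionlemma} together with the projection formula of Proposition~\ref{rescor}(3), which is precisely the substitution-and-projection computation you carry out (your extra verification that $\eta_i$ is a cocycle via Proposition~\ref{paireqondiag} is a correct filling-in of a detail the paper leaves implicit).
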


Thus we reduce the case of computing a suitable $\eps$ for
$a,a' \in \Sel^{(2)}(E)$, to the case of
computing $\eps_i$ which we have
already done in Proposition
\ref{spcaseeps} by
setting $K$ as $K(e_i)$ and $T_1$ as $T_i$.

Considering the
local part of the computation we have:
$\gamma_{v}=\sum\limits_{i=1}^n\gamma_{i,v}'$
where
$$\gamma_{i,v}'\coloneqq
(\mathfrak{a}_v-\partial\mathfrak{b}_v)\cup\left(\cor^{G_K}_{G_{K(e_i)}}(\mathfrak{t}_i')\right)_v-\mathfrak{b}_v\cup\partial\left(\cor^{G_K}_{G_{K(e_i)}}(\mathfrak{t}_i')\right)_v-\left(\cor^{G_K}_{G_{K(e_i)}}(\eps_i)\right)_v.$$
Using the double coset formula (equation
\eqref{doublecosetfornumfields}) we have:
$$\left(\cor^{G_K}_{G_{K(e_i)}}(\mathfrak{t}_i)\right)_v=\sum\limits_{w|v}\cor^{G_{K_v}}_{G_{K(e_i)_w}}\mathfrak{t}_{i,w},$$
where $\mathfrak{t}_{i,w}\coloneqq
\res^{G_{K(e_1)_w}}_{G_{K(e_{i,w})}}((g_{i,w})_*\mathfrak{t}_i)\in \cC^1(G_{K(e_i)_w}, \langle(T_{i,w})-(T_0)\rangle)$,
$g_{i,w}\in G_K$
corresponds to the valuation $w$ of $K(e_i)$ above $v$ and
$e_{i,w}$, $T_{i,w}$  are the $g_{i,w}$-conjugates of
$e_i$, $T_i$ respectively.
Concretely, we have:
\[
\mathfrak{t}_{i,w}(\sigma)\coloneqq \begin{cases}
&0 \  \ \ \ \hfill \sigma(\sqrt{\beta_{i,w}'})=\sqrt{\beta_{i,w}'},\\
    &(T_{i,w})-(T_0)\  \ \ \ \hfill \sigma(\sqrt{\beta_{i,w}'})=-\sqrt{\beta_{i,w}'},
\end{cases}
\]
where $\beta_{i,w}'\coloneqq g_{i,w}(\beta_i')$.

Similarly, applying the
double coset formula for
$\eps_i$ we get:
$$\left(\cor^{G_K}_{G_{K(e_i)}}\eps_i\right)_v=\sum\limits_{w|v}\cor^{G_{K_v}}_{G_{K(e_i)_w}}\eps_{i,w}.$$
Hence, $\gamma_{i,v}'=\sum\limits_{w|v}\cor^{G_{K_v}}_{G_{K(e_i)_w}}\gamma_{i,w},$
where
\begin{equation}
    \gamma_{i,w}\coloneqq
    \left(\res^{G_{K(e_i)_w}}_{G_K}(\mathfrak{a})-\res^{G_{K(e_i)_w}}_{G_{K_v}}(\partial\mathfrak{b}_v)\right)\cup
    \mathfrak{t}_{i,w}-\res_{G_{K_v}}^{G_{K(e_i)_w}}(\mathfrak{b}_v)\cup\partial\mathfrak{t}_{i,w}-\eps_{i,w}.
\end{equation}
The following proposition shows that $\gamma_{i,w}$ is a 2-cocycle.
\begin{proposition}
$\gamma_{i,w}\in \cZ^2(K(e_i)_w)$.
\end{proposition}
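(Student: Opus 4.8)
The plan is to verify $\partial\gamma_{i,w}=0$ by repeating, over the local field $K(e_i)_w$, the bookkeeping that shows the cochain $\gamma_v$ of Definition~\ref{def_CTP} is a $2$-cocycle, with $\mathfrak{t}_{i,w}$ playing the role of $\mathfrak{a}'$ and $\eps_{i,w}$ the role of $\eps_v$. The one new input I would record first is the local analogue of the identity $\partial\eps_v=\eta_v$. Since $\eps_{i,w}=\res^{G_{K(e_i)_w}}_{G_{K(e_{i,w})}}\bigl((g_{i,w})_*\eps_i\bigr)$ with $\partial\eps_i=\eta_i$ from the preceding corollary, and since $\partial$ commutes with $\res$ and with $(g_{i,w})_*$, I get $\partial\eps_{i,w}=\res\bigl((g_{i,w})_*\eta_i\bigr)$. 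Expanding $\eta_i=\partial\res^{G_{K(e_i)}}_{G_K}(\mathfrak{a})\cup_1\mathfrak{t}_i-\res^{G_{K(e_i)}}_{G_K}(\mathfrak{a})\cup_2\partial\mathfrak{t}_i$ and using that $(g_{i,w})_*$ and $\res$ commute with the cup products (the pairings $\langle\cdot,\cdot\rangle_i$ being Galois-equivariant) converts this to $\partial\eps_{i,w}=\partial\mathfrak{a}_{i,w}\cup_1\mathfrak{t}_{i,w}-\mathfrak{a}_{i,w}\cup_2\partial\mathfrak{t}_{i,w}$, where $\mathfrak{a}_{i,w}:=\res^{G_{K(e_i)_w}}_{G_{K(e_{i,w})}}\bigl((g_{i,w})_*\res^{G_{K(e_i)}}_{G_K}\mathfrak{a}\bigr)$ and $\mathfrak{t}_{i,w}$ is exactly the cochain appearing in $\gamma_{i,w}$.

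Next I would differentiate $\gamma_{i,w}$ term by term using the Leibniz rule $\partial(x\cup y)=\partial x\cup y+(-1)^{\deg x}x\cup\partial y$. Writing $\tilde{\mathfrak{a}}=\res^{G_{K(e_i)_w}}_{G_K}(\mathfrak{a})$ and $\tilde{\mathfrak{b}}=\res^{G_{K(e_i)_w}}_{G_{K_v}}(\mathfrak{b}_v)$, the relations $\partial^2\tilde{\mathfrak{b}}=0$ and $\partial^2\mathfrak{t}_{i,w}=0$ kill two terms and leave
\[
\partial\gamma_{i,w}=\partial\tilde{\mathfrak{a}}\cup_1\mathfrak{t}_{i,w}-(\tilde{\mathfrak{a}}-\partial\tilde{\mathfrak{b}})\cup_1\partial\mathfrak{t}_{i,w}-\partial\tilde{\mathfrak{b}}\cup_2\partial\mathfrak{t}_{i,w}-\partial\eps_{i,w}.
\]
Here $\tilde{\mathfrak{a}}-\partial\tilde{\mathfrak{b}}$ takes values in principal divisors — it is the restriction to $G_{K(e_i)_w}$ of the local statement $\mathfrak{a}_v-\partial\mathfrak{b}_v\in\Princ(E)$ of \S\ref{localcomputation} — and $\partial\mathfrak{t}_{i,w}$ is principal as well, so Proposition~\ref{paireqondiag} lets me replace $(\tilde{\mathfrak{a}}-\partial\tilde{\mathfrak{b}})\cup_1\partial\mathfrak{t}_{i,w}$ by $(\tilde{\mathfrak{a}}-\partial\tilde{\mathfrak{b}})\cup_2\partial\mathfrak{t}_{i,w}$. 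After this substitution the two copies of $\partial\tilde{\mathfrak{b}}\cup_2\partial\mathfrak{t}_{i,w}$ cancel, exactly as in the check that $\gamma_v$ is a cocycle, and I am left with
\[
\partial\gamma_{i,w}=\partial\tilde{\mathfrak{a}}\cup_1\mathfrak{t}_{i,w}-\tilde{\mathfrak{a}}\cup_2\partial\mathfrak{t}_{i,w}-\partial\eps_{i,w}.
\]

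The computation therefore closes precisely when $\partial\eps_{i,w}=\partial\tilde{\mathfrak{a}}\cup_1\mathfrak{t}_{i,w}-\tilde{\mathfrak{a}}\cup_2\partial\mathfrak{t}_{i,w}$, that is, when the lift $\mathfrak{a}_{i,w}$ produced by the double-coset formula~\eqref{doublecosetfornumfields} and the honest restriction $\tilde{\mathfrak{a}}$ may be interchanged inside these two cup products. This is the step I expect to be the main obstacle, since $(g_{i,w})_*$ is only the identity on cohomology and not on cochains, so $\mathfrak{a}_{i,w}$ and $\tilde{\mathfrak{a}}$ need not coincide; setting $\phi:=\tilde{\mathfrak{a}}-\mathfrak{a}_{i,w}$, the two claims differ by $\partial\phi\cup_1\mathfrak{t}_{i,w}-\phi\cup_2\partial\mathfrak{t}_{i,w}$, and I must show this vanishes. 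The way in is that $\tilde{\mathfrak{a}}$ and $\mathfrak{a}_{i,w}$ lift the two \emph{cohomologous} cocycles $\res^{G_{K(e_i)_w}}_{G_K}(\alpha)$ and $(g_{i,w})_*\res^{G_{K(e_i)}}_{G_K}(\alpha)$, both representing $\res(a)$, so $\partial\phi$ is principal-valued and $\phi$ reduces modulo $\Princ(E)$ to a coboundary $\partial u$ of a $0$-cochain $u$ valued in $E$. Choosing the tautological lifts $\mathfrak{a}(\sigma)=(\alpha(\sigma))-(T_0)$ and writing $\phi=\partial U+\phi_0$ with $U=(u)-(T_0)$ and $\phi_0\in\Princ(E)$, one reduces to evaluating the remaining cup products on the divisors supported on $E[2]$ that occur in $\mathfrak{t}_{i,w}$, which can be done directly from the pairing values tabulated in \S\ref{GlobComp} together with Proposition~\ref{paireqondiag}. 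Granting this vanishing, $\partial\gamma_{i,w}=0$, so $\gamma_{i,w}\in\cZ^2(K(e_i)_w)$.
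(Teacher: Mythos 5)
Your reduction of $\partial\gamma_{i,w}$ to the single identity $\partial\eps_{i,w}=\partial\tilde{\mathfrak{a}}\cup_1\mathfrak{t}_{i,w}-\tilde{\mathfrak{a}}\cup_2\partial\mathfrak{t}_{i,w}$ is correct and matches the paper's computation: the Leibniz rule, the cancellation of the $\mathfrak{b}_v$-terms via Proposition~\ref{paireqondiag}, and the identification of $\partial\eps_{i,w}$ with $\res\bigl((g_{i,w})_*\eta_i\bigr)$ are all exactly what is needed. But your argument stops at the crux. You correctly isolate the discrepancy $\partial\phi\cup_1\mathfrak{t}_{i,w}-\phi\cup_2\partial\mathfrak{t}_{i,w}$ with $\phi=\tilde{\mathfrak{a}}-\mathfrak{a}_{i,w}$, and then write ``granting this vanishing''. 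That vanishing is the entire content of the proposition, and the route you sketch would not close it: if, say, $\phi=\partial U$ for a $0$-cochain $U$ with values in $\Div^0(E)$, the discrepancy equals $-\partial\bigl(U\cup_2\partial\mathfrak{t}_{i,w}\bigr)$, a $3$-coboundary which is in general a nonzero cochain; since you need $\partial\gamma_{i,w}=0$ on the nose and not merely up to coboundaries, knowing that $\phi$ lifts a coboundary and tabulating pairing values on $E[2]$ cannot suffice without determining $\phi$ itself.

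The missing idea is that $\phi=0$ identically, so there is nothing to evaluate. The lift $\mathfrak{a}$ depends only on $\chi(\sigma)$ and satisfies $\sigma\,\mathfrak{a}(\chi(\tau))=\mathfrak{a}(\sigma\cdot\chi(\tau))$, and the double-coset representative $g_{i,w}$ is not arbitrary: by the decomposition $\sigma=\sigma_s\sigma_p$ of Remark~\ref{pijorbs} (already used in Lemma~\ref{corestrionlemma}) it can be chosen with $\chi(g_{i,w})=\cv{0}$. Then
\[
\bigl((g_{i,w})_*\mathfrak{a}\bigr)(\sigma)
=g_{i,w}\,\mathfrak{a}\bigl(g_{i,w}^{-1}\sigma g_{i,w}\bigr)
=\mathfrak{a}\bigl(g_{i,w}\,\chi(g_{i,w}^{-1}\sigma g_{i,w})\bigr)
=\mathfrak{a}\bigl(\chi(\sigma)\bigr)
=\mathfrak{a}(\sigma),
\]
the third equality because the cocycle relation gives $g_{i,w}\,\chi(g_{i,w}^{-1}\sigma g_{i,w})=\chi(\sigma)\cdot\sigma\chi(g_{i,w})\cdot\chi(g_{i,w})^{-1}$, which collapses to $\chi(\sigma)$ once $\chi(g_{i,w})=\cv{0}$. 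Hence $\mathfrak{a}_{i,w}=\tilde{\mathfrak{a}}$ as cochains, $\phi=0$, and $\partial\gamma_{i,w}=0$ with no further computation. Everything in your proposal before this point is sound; this one observation is what it lacks.
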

\begin{proof}
 Using $\partial\eps_i=\eta_i$ we have
    \begin{align*}
     \partial\gamma_{i,w}
     &=\res^{G_{(e_i)_w}}_{G_K}\partial\frak{a}\cup
        \frak{t}_{i,w}'-\res^{G_{K(e_i)_w}}_{G_K}\frak{a}\cup
        \partial\frak{t}_{i,w}'\\
        &\ \ \  -\res_{G_{K(e_{i,w})}}^{G_{K(e_i)_w}}(g_{i,w})_*
        \left(\partial\res_{G_K}^{G_{K(e_i)}}\frak{a}\cup\frak{t}_i'-\res_{G_K}^{G_{K(e_i)}}\frak{a}\cup\partial\frak{t}_i'\right)\tag{using
        the double coset formula (equation \eqref{doublecosetfornumfields}) on
        $\eta_i$}\\
     &=\res^{G_{K(e_i)_w}}_{G_K}(\partial(\frak{a}-(g_{i,w})_*\frak{a}))\cup\frak{t}_{i,w}'-\res^{G_{K(e_i)_w}}_{G_K}(\frak{a}-(g_{i,w})_*\frak{a})\cup\partial\frak{t}_{i,w}'\tag{$\because(g_{i,w})_*$ commutes with $\res$, $\cup$ and $\partial$}.
 \end{align*}
 So if $(g_{i,w})_*(\frak{a})=\frak{a}$, then
 $\partial\gamma_{i,w}=0$.
 Note that $\frak{a}(\tau)$ only depends on $\ct$, therefore we can  equivalently write $\frak{a}(\ct)$
 instead of $\frak{a}(\tau)$. We have $\sigma\frak{a}(\ct)=\frak{a}(\sigma\cdot\ct)$. 
 To see this recall that 
 $\ct=\sum\limits_{i=1}^3a_i(T_i)$, 
 for some $a_i\in \mu_2$ depending on $\tau$, hence if $\ct=\cv{j}$, then $\sigma\ct=\cv{\sigma\cdot j}$.

 Now for $\sigma\in G_K$ 
 \begin{align*}
     \left((g_{i,w})_*(\frak{a})\right)(\sigma)&=g_{i,w}\frak{a}(g_{i,w}^{-1}\sigma g_{i,w})\tag{by definition}\\
     &=g_{i,w}\frak{a}(\chi(g_{i,w}^{-1}\sigma g_{i,w}))=\frak{a}(g_{i,w}\chi(g_{i,w}^{-1}\sigma g_{i,w}))\\
     &=\frak{a}(\chi(\sigma g_{i,w})\chi(g_{i,w})^{-1})=\frak{a}(\cs\sigma\chi(g_{i,w})\chi(g_{i,w})^{-1})\tag{$\chi$ is a 1-cocycle}\\
 \end{align*}
 Recall from the proof of Lemma \ref{corestrionlemma}
or from Remark \ref{pijorbs} that $g_{i,w}$ can be chosen such that
    $\chi(g_{i,w})=\cv{0}$ via the decomposition $\sigma=\sigma_s\sigma_p$ for
    $\sigma\in G_K$. Making such a choice 
for $g_{i,w}$ we have
\[ \left((g_{i,w})_*(\frak{a})\right)(\sigma)=\frak{a}(\cs\sigma\chi(g_{i,w})\chi(g_{i,w})^{-1})=\frak{a}(\cs)=\frak{a}(\sigma).\qedhere\]

\end{proof}

The above proposition together with Proposition \ref{rescor} part (2)
implies that
$$\inv_{K_v}([\gamma_v])=\sum\limits_{i=1}^n\inv_{K_v}([\gamma'_{i,v}])=\sum\limits_{i=1}^n\sum\limits_{w|v}\inv_{K_v(e_{i,w})}([\gamma_{i,w}]),$$ where $[z]$
represents the cohomology class of the cocycle $z$. This shows that the contribution
from a place $v$ of $K$ in CTP is  the sum of contributions from $G_{K_v}$-
orbits of $\Delta$.
Recall the definition of $\prod\limits_i^\diamond$ from \S \ref{notations}.
From the above
computation and the \S \ref{localcomputation} we get $\delta'_{i,v}\in K_v(e_i)^\times$ such that
the local contribution at $v$ in CTP is
$\prod\limits_{i}^\diamond(\delta'_{i,v},\beta_i')_{K_v(e_i)}.$
Therefore, we have the following theorem
\begin{theorem}\label{ctpabstformula}
We have
    $$(-1)^{2\langle
    a,a'\rangle_\ctp}=\prod\limits_{v}\prod\limits_{i}^\diamond(\delta'_{i,v},\beta_{i}')_{K_v(e_i)},$$
    where $i$ runs through the $G_{K_v}$-orbits of $\Delta$ for each place $v$ of
    $K$.
\end{theorem}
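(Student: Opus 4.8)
The plan is to assemble the global reduction from the Corollary to Lemma~\ref{corestrionlemma}, the local double-coset reduction of this subsection, and the explicit local class computed in Section~\ref{localcomputation}. I start from the definition $\langle a,a'\rangle_\ctp=\sum_v\inv_{K_v}(c_v)$ with $c_v=[\gamma_v]$. Choosing $\eps=\sum_{i=1}^n\cor^{G_K}_{G_{K(e_i)}}(\eps_i)$ as permitted by the Corollary, the decomposition $\gamma_v=\sum_{i=1}^n\gamma'_{i,v}$ together with the double coset formula~\eqref{doublecosetfornumfields} gives $\gamma'_{i,v}=\sum_{w\mid v}\cor^{G_{K_v}}_{G_{K(e_i)_w}}\gamma_{i,w}$, where each $\gamma_{i,w}$ is a genuine $2$-cocycle by the preceding Proposition.

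Next I would push the local invariant through the corestrictions. By Proposition~\ref{rescor} part~(2), $\inv_{K_v}\circ\cor^{G_{K_v}}_{G_{K(e_i)_w}}=\inv_{K_v(e_{i,w})}$ (using $K(e_i)_w=K_v(e_{i,w})$), so that
$$\inv_{K_v}(c_v)=\sum_{i=1}^n\sum_{w\mid v}\inv_{K_v(e_{i,w})}([\gamma_{i,w}]).$$
The key bookkeeping step is to observe that, as $i$ ranges over representatives of the $G_K$-orbits of $\Delta$ and $w$ ranges over the places of $K(e_i)$ above $v$, the pairs $(i,w)$ are in bijection with the $G_{K_v}$-orbits of $\Delta$: each place $w\mid v$ of $K(e_i)$ singles out exactly one $G_{K_v}$-orbit inside the $G_K$-orbit $\Delta_i$. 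Hence the double sum collapses to a single sum over the $G_{K_v}$-orbits of~$\Delta$, matching the $\prod_i^\diamond$ convention of Section~\ref{notations}.

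Finally, for each such orbit I would invoke the local computation of Section~\ref{localcomputation}, read off with $K$ replaced by the completion $K_v(e_i)$, the distinguished point $T_1$ replaced by $T_i$, and $\beta'$ replaced by~$\beta_i'$. That computation shifts the relevant cocycle (here $\gamma_{i,w}$, playing the role of $\gamma_v$ in Section~\ref{localcomputation}) by a coboundary to one taking the value $1/\delta'_{i,v}\in K_v(e_i)^\times$ when $\chi'(\tau)=\chi'(\rho)=-1$ and $1$ otherwise; Proposition~\ref{classofquaternion} then identifies its class with the quaternion algebra $(\delta'_{i,v},\beta_i')$ over $K_v(e_i)$. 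Applying the relation $(-1)^{2\inv_{K_v(e_i)}([(x,y)])}=(x,y)_{K_v(e_i)}$ to each orbit and taking the product over all places $v$ and all $G_{K_v}$-orbits $i$ yields the claimed formula.

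I expect the main obstacle to be the compatibility check underlying the middle and final paragraphs: one must verify that the object $\gamma_{i,w}$ produced by the $\Sel^2\times\Sel^2$ reduction has, after the coboundary shift, the same cochain-level shape (in the variables $\chi(\tau),\chi'(\tau),\chi'(\rho)$) as the $\gamma'_v$ of Section~\ref{localcomputation}, so that the quaternion identification over $K_v(e_i)$ with datum $\beta_i'$ applies verbatim, and that the indexing of $G_{K_v}$-orbits by the pairs $(i,w)$ is exactly the one encoded by $\prod_i^\diamond$. Once this compatibility is in place, the remainder is a direct substitution.
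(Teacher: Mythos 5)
Your proposal follows the paper's own route essentially step for step: the corestriction decomposition of $\eps$ and $\gamma_v$ from the Corollary to Lemma~\ref{corestrionlemma}, the double coset formula~\eqref{doublecosetfornumfields} to localize, Proposition~\ref{rescor}(2) to push $\inv_{K_v}$ through the corestrictions, the identification of the pairs $(i,w)$ with the $G_{K_v}$-orbits of $\Delta$, and the reduction of each $\gamma_{i,w}$ to the computation of Section~\ref{localcomputation} with $(K,T_1,\beta')$ replaced by $(K(e_i),T_i,\beta_i')$. The compatibility check you flag at the end is exactly the point the paper also relies on (and treats somewhat tersely), so the proposal is correct and matches the paper's argument.
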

The following corollary says that we only need to consider the 
contributions to the CTP from finitely many places and that what those places are.
\begin{corollary}
Let $S_{a, a'}$ be the set of finite places such that either of $\alpha_v$, $\alpha'_v$ 
(the localizations at $v$ of cocycles representing $a$, $a'$ (resp.)) factor through a ramified 
extension. Clearly $S_{a,a'}\subset \{\text{Primes of bad reduction of }E/K\}\cup \{2\}$. Then with above notations we have
$$(-1)^{2\langle a,a'\rangle_\ctp}=\prod\limits_{v\in
    S}\prod\limits_{i}^\diamond(\delta'_{i,v},\beta_{i}')_{K_v(e_i)},$$
    where $i$ runs through $G_{K_v}$-orbits of $\Delta$.
\end{corollary}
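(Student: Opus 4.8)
The plan is to read off the corollary from Theorem~\ref{ctpabstformula}, whose right-hand side is already a product over \emph{all} places $v$ of $K$. It suffices to show that the local factor $\prod_i^\diamond(\delta'_{i,v},\beta'_i)_{K_v(e_i)}$ is trivial for every $v\notin S_{a,a'}$; granting this, the product collapses onto $v\in S_{a,a'}$, and the only remaining assertion is the containment $S_{a,a'}\subset\{\text{primes of bad reduction}\}\cup\{v\mid 2\}$.

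For the vanishing I would fix $v\notin S_{a,a'}$, so that both $\alpha_v$ and $\alpha'_v$ factor through unramified extensions. Unwinding the construction of Section~\ref{notations}, this forces $K_v(E[2])/K_v$ to be unramified (in particular each $K_v(e_i)/K_v$ is unramified), and lets us take each $\beta_i,\beta'_i$ to be a unit whose square root generates an unramified, possibly trivial, quadratic extension of $K_v(e_i)$. The key observation is then that $\delta_{v,i}=h_{v,i}\overline{h_{v,i}}$ is by its very definition \eqref{delta} a norm from the quadratic extension cut out by $\sqrt{\beta_i}$; since that extension is unramified, $\delta_{v,i}$ has even valuation, and hence so does $\delta'_{i,v}=\delta_{v,j}\delta_{v,k}$ (all valuations agreeing under the unramified extensions involved). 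On the other side, $K_v(e_i)(\sqrt{\beta'_i})/K_v(e_i)$ is unramified (or trivial), so its norm group is precisely the set of elements of even valuation. Therefore $\delta'_{i,v}$ is a local norm for $\beta'_i$ and $(\delta'_{i,v},\beta'_i)_{K_v(e_i)}=1$. I would stress that this argument is insensitive to the residue characteristic and so applies verbatim when $v\mid 2$: such primes, when the cocycles happen to be unramified, simply fail to lie in $S_{a,a'}$. The archimedean places are disposed of by the same mechanism: complex places are trivial, while at a real place $\delta_{v,i}=h_{v,i}\overline{h_{v,i}}$ is either a norm from $\mathbb{C}/\mathbb{R}$ or a square in $\mathbb{R}$, hence positive, so $\delta'_{i,v}>0$ and the real symbol is trivial.

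For the containment I would invoke the standard local description of the Kummer image: at a place $v$ of good reduction with $v\nmid 2$, the map $E(K_v)/2E(K_v)\to\H^1(K_v,E[2])$ has image exactly the unramified subgroup $\H^1_{\mathrm{nr}}(K_v,E[2])$ (cf.\ \cite{silverman1}). Since $a,a'\in\Sel^{(2)}(E)$, the localizations $\alpha_v,\alpha'_v$ lie in this image and are unramified, whence $v\notin S_{a,a'}$. Contrapositively, $S_{a,a'}\subset\{\text{primes of bad reduction of }E/K\}\cup\{v\mid 2\}$, which also shows $S_{a,a'}$ is finite.

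I expect the main obstacle to be the parity step in the second paragraph: one must be careful that ``$\alpha_v,\alpha'_v$ unramified'' genuinely renders all of $K_v(e_i)$, $K_v(e_i)(\sqrt{\beta_i})$ and $K_v(e_i)(\sqrt{\beta'_i})$ unramified, so that the even-valuation characterization of the norm group is available and applicable to $\delta'_{i,v}$; identifying $\delta_{v,i}$ as a norm from the $\beta_i$-extension (rather than merely as a unit) is what makes the Hilbert symbol vanish even at $v\mid 2$. Once these structural facts are verified, the symbol computation and the collapse of the product over $v\notin S_{a,a'}$ are routine.
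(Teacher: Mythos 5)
Your proof is correct and follows essentially the same route as the paper's: both reduce to Theorem~\ref{ctpabstformula} and kill the local factors at $v\notin S_{a,a'}$ by exploiting that $\delta_{v,i}=h_{v,i}\overline{h_{v,i}}$ is a norm from the unramified extension $K_v(e_i)(\sqrt{\beta_i})$, together with the triviality of the archimedean contributions. The only cosmetic difference is that you pass through the even-valuation description of the norm group of an unramified quadratic extension, whereas the paper invokes the uniqueness of that extension to identify $K_v(e_i)(\sqrt{\beta_i})=K_v(e_i)(\sqrt{\beta_i'})$ directly — two equivalent facts from local class field theory.
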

\begin{proof}
    Let $v$ be a finite place of $K$. It is easy to see that if the localization 
    of $\alpha_v$ or $\alpha'_v$ is trivial as a cocycle
    then the contribution to the CTP at $v$ is trivial. So we assume 
    that $\alpha_v$, $\alpha_v'$
    factor through a non-trivial unramified extension. This implies that 
    $K_v(\sqrt{\beta_i})/K_v(e_i)$ is 
    an unramified extension (similarly for $\beta_i'$).
    Now if either of $\beta_i$ or $\beta_i'$ is a
    square in $K_v(e_i)$ then the local contribution corresponding to the orbit 
    of $e_i$ locally is trivial. Otherwise, $K_v(\sqrt{\beta_i})=K_v(\sqrt{\beta_i'})$ 
    over $K_v(e_i)$ and hence $\delta_{v,i}'$ is the norm of an element of 
    $K_v(\sqrt{\beta_i'})$ to $K_v(e_i)$.
    
    For an infinite place $v$, if $\beta_i$ is complex or positive,
    $\delta_{v,i}$ can be chosen to be 1. Hence if $\beta_i$ or $\beta_i'$ is 
    complex or positive then the contribution from the orbit of $e_i$ is trivial. 
    Therefore, we assume that $v$
    is a real place, then at least one of $\beta_1,\beta_2,\beta_3$ (say
    $\beta_1$) is real and
    negative. If $\beta_2$, $\beta_3$ are complex then $\delta_1'=1$ and other
    orbits correspond to complex valuation, so the contribution at $v$ is
    trivial. 
    Otherwise, $\beta_2$ and $\beta_3$ are real, and exactly one of them is
    negative. So $\delta_{v,i}'$ is a norm from $\bbC$ to $\bbR$ and hence the 
    contribution at $v$ is trivial.
    One can also use the equation \eqref{ctpexactformula} (see \S
    \ref{sec:CTPExactFormula}) along with the fact that $\delta_{v,i}$ is a norm
    of an element in $\bbC$ to conclude that the contributions at the infinite
    places are trivial. 
\end{proof}

\subsection{Exact formula for CTP}\label{sec:CTPExactFormula}
In order to obtain an exact formula
for CTP, we express the
formula for CTP obtained in the Theorem \ref{ctpabstformula} in terms of
the Hilbert symbols $(\delta_{v,i},\beta_i')_{K_v(e_i)}.$
Remark \ref{omegai} implies that there is
a $d_v\in K_v^\times$ such that $\prod\limits_{i=1}^3d_v\delta_{v,i}\in
(K_v^\times)^2.$ Previously, we expressed the CTP in terms of the Hilbert
symbols $(\delta_{v,i}',\beta_i')_{K_v(e_i)}$.
Using Theorem \ref{ctpabstformula}, if $c_v$ denotes the class of
$\gamma_v$ in $\Br(K_v)$, then we have:
\begin{align*}
(-1)^{2\inv_{K_v}(c_v)}&=\prod\limits_{i}^\diamond(\delta_{v,i}',\beta_i')_{K_v(e_i)}=
\prod\limits_{i}^\diamond(d_v^2,\beta_i')_{K_v(e_i)}(\delta_{v,i}',\beta_i')_{K_v(e_i)}\\
&=\prod\limits_{i}^\diamond(d_v^2\delta_{v,i}',\beta_i')_{K_v(e_i)}
=\prod\limits_{i}^\diamond(d_v\delta_{v,i},\beta_i')_{K_v(e_i)}\\
&=\prod\limits_i^\diamond(d_v,\beta_i')_{K_v(e_i)}\prod\limits_i^\diamond(\delta_{v,i},\beta_i')_{K_v(e_i)}.
\end{align*}
Here $\delta_{v,i}'\coloneqq \delta_{v,j}\delta_{v,k}$
 is as in Remark \ref{omegai}.
We now show that
$\prod\limits_i^\diamond(d_v,\beta_i')_{K_v(e_i)}=1$.
For this we use the fact that if $L$ is a finite
extension of $K_v$ and $z\in K_v^\times$, $z'\in L^\times$, then
$(z,z')_{L}=(z,\mathrm{Norm}_{L/K_v}(z'))_{K_v}$.
Using this we get:
$$\prod\limits_{i}^\diamond(d_v,\beta_i')_{K_v(e_i)}=\prod\limits_{i}^\diamond(d_v,\mathrm{Norm}_{K_v(e_i)/K_v}(\beta_i)')_{K_v}=(d_v,\beta_1'\beta_2'\beta_3')_{K_v}=1.$$
Therefore, we get:
\begin{equation}\label{ctpexactformula}
(-1)^{2\langle
    a,a'\rangle_\ctp}=\prod\limits_v\prod\limits_i^\diamond(\delta_{v,i},\beta_i')_{K(e_i)}.
\end{equation}
One verifies that the above equation looks similar to the expression in equation \eqref{cas}
for the Cassels' pairing.
The following theorem  shows that the
pairing $\langle\cdot,\cdot\rangle_\mathrm{Cas}$  is the same as the $\langle\cdot,\cdot\rangle_\ctp$, using
the expression for $\delta_{v,i}$ (equation \eqref{deltavi}).

\begin{theorem}\label{cas=ctp}
For $a,a'\in \Sel^{(2)}(E)$, we have
        $$\langle a,a'\rangle_{\mathrm{Cas}}=\langle a,a'\rangle_{\ctp}.$$
\end{theorem}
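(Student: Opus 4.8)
The plan is to compare the two product formulas factor by factor. By \eqref{cas} and \eqref{ctpexactformula} both pairings have the shape $\prod_v\prod_i^\diamond(\,\cdot\,,\beta_i')_{K_v(e_i)}$ with \emph{identical} second arguments $\beta_i'$, so it suffices to compare the first arguments, namely $L_i(\mathfrak{q}_v)$ for Cassels's pairing and $\delta_{v,i}$ for the CTP, at each place $v$ and each orbit representative $i$. Since Cassels's pairing is independent of the chosen local point $\mathfrak{q}_v$ (and of the global point $\mathfrak{q}_i$ and tangent $L_i$) by \cite{cas98}, while the CTP is independent of the chosen $P_v$ by Remark \ref{ctpnotwd} (here $a'\in\Sel^{(2)}(E)$), I am free to use \emph{compatible} local data on the two sides.

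First I would set up the geometric dictionary relating the two local points. The $2$-covering map $D_a\to E$ sends a $K_v$-point of $D_a$ with coordinates $(\Gamma_1:\Gamma_2:\Gamma_3:T)$ to the point $R\in E(K_v)$ with $x(R)-e_i=\beta_i(\Gamma_i/T)^2$, so the image of $D_a(K_v)$ is precisely the coset of points whose $2$-descent image equals $a_v$. The point $Q_v=2P_v$ from \S\ref{localcomputation} satisfies $x(Q_v)-e_i=\beta_i w_{v,i}^2$, hence lies in this coset; I would therefore choose $\mathfrak{q}_v\in D_a(K_v)$ mapping to $Q_v$, normalise $T=1$, and fix signs so that the $\Gamma$-coordinates of $\mathfrak{q}_v$ are exactly the $w_{v,i}$ appearing in \eqref{deltavi}.

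Next I would compute $L_i$ and carry out the comparison. Writing $H_i$ from \eqref{hi} as a conic in $(\Gamma_j:\Gamma_k:T)$, its polar (tangent) at $\mathfrak{q}_i=(\Gamma_j^*:\Gamma_k^*:1)$ is
\[
  L_i=\frac{\beta_j\Gamma_j^*\Gamma_j-\beta_k\Gamma_k^*\Gamma_k}{e_j-e_k}+T.
\]
Evaluating at $\mathfrak{q}_v=(w_{v,j}:w_{v,k}:1)$ and using $\beta_j(\Gamma_j^*)^2-\beta_k(\Gamma_k^*)^2=s_{kj}$ (which is just $H_i(\mathfrak{q}_i)=0$), a direct comparison with \eqref{deltavi} gives $L_i(\mathfrak{q}_v)=\tfrac12\,\delta_{v,i}$. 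Consequently $(L_i(\mathfrak{q}_v),\beta_i')_{K_v(e_i)}=(2,\beta_i')_{K_v(e_i)}\,(\delta_{v,i},\beta_i')_{K_v(e_i)}$, so the only discrepancy between the two formulas is the product of the spurious factors $(2,\beta_i')$.

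Finally I would show these factors cancel, by exactly the argument used for $d_v$ in \S\ref{sec:CTPExactFormula}: applying $(z,z')_L=(z,\mathrm{Norm}_{L/K_v}(z'))_{K_v}$ and running over the local orbits,
\[
  \prod_v\prod_i^\diamond(2,\beta_i')_{K_v(e_i)}=\prod_v(2,\beta_1'\beta_2'\beta_3')_{K_v}=1,
\]
since $\beta_1'\beta_2'\beta_3'\in(K^\times)^2$ by the norm condition defining the class of $a'$. Combining this with \eqref{cas}, \eqref{ctpexactformula} and Theorem \ref{ctpabstformula} yields $\langle a,a'\rangle_{\mathrm{Cas}}=\langle a,a'\rangle_\ctp$. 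I expect the main obstacle to be the careful matching of the local points in the second step — in particular justifying $\Gamma_{i,v}=w_{v,i}$ up to sign and projective scaling, and checking that the residual sign and scaling ambiguities are absorbed (the same $\diamond$-product-plus-norm mechanism that kills the $(2,\cdot)$ factors also kills any stray $(-1,\cdot)$ factors). Once this dictionary is in place, the identity $L_i(\mathfrak{q}_v)=\tfrac12\,\delta_{v,i}$ is a routine computation.
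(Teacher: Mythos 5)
Your proposal is correct and follows essentially the same route as the paper: take $\mathfrak{q}_v$ to be the point of $D_a(K_v)$ with $\Gamma$-coordinates $(w_{v,1}:w_{v,2}:w_{v,3}:1)$ lying over $Q_v=2P_v$, evaluate the tangent line $L_i$ at it, and match the result against $\delta_{v,i}$ from \eqref{deltavi} up to a harmless constant. The only (cosmetic) difference is the normalisation of $L_i$ — your polar form yields $L_i(\mathfrak{q}_v)=\tfrac12\delta_{v,i}$ and you kill the factor $2$ via $\prod_i^\diamond(2,\beta_i')_{K_v(e_i)}=(2,\beta_1'\beta_2'\beta_3')_{K_v}=1$, whereas the paper's Euler-relation normalisation yields $L_i(\mathfrak{q}_v)=s_{kj}\delta_{v,i}$ and absorbs $s_{kj}$ as a norm; both are standard and valid.
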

\begin{proof}
Let $\mathfrak{q}_i\coloneqq (\Gamma_j^*:\Gamma_k^*:1)$ be a
	global point on $H_i(\Gamma_j,\Gamma_k,T)$ (as in equation \eqref{pij}), then
\begin{align*}
        L_i&\coloneqq \sum\limits_{l=1}^3U_l\frac{\partial H_i}{\partial U_l}(\mathfrak{q}_i)+T\frac{\partial H_i}{\partial T}(\mathfrak{q}_i)\\
        &=\frac{\partial H_i}{\partial\Gamma_j}(\mathfrak{q}_i)\left(\sum\limits_{l=1}^3U_l\frac{\partial\Gamma_j}{\partial U_l}(\mathfrak{q}_i)\right)+ \frac{\partial H_i}{\partial\Gamma_k}(\mathfrak{q}_i)\left(\sum\limits_{l=1}^3U_l\frac{\partial\Gamma_k}{\partial U_l}(\mathfrak{q}_i)\right)+T\frac{\partial H_i}{\partial T}(\mathfrak{q}_i)\\
        \tag{$\because(\Gamma_1,\Gamma_2,\Gamma_3)$ is linear change of coordinates from $(U_1,U_2,U_3)$}\\
        &=\Gamma_j\frac{\partial H_i}{\partial \Gamma_j}(\mathfrak{q}_i)+\Gamma_k\frac{\partial H_i}{\partial \Gamma_k}(\mathfrak{q}_i)+T\frac{\partial H_i}{\partial T}(\mathfrak{q}_i).
\end{align*}
Let $\mathfrak{q}_v=(w_{v,1}:w_{v,2}:w_{v,3}:1)$ be the point on $D_{a}$
        (in $(\Gamma_1:\Gamma_2:\Gamma_3:T)$ coordinates) corresponding
to $Q_v$ (as in the previous section).
Then we have $L_i(q_v)=2(e_k-e_j)+2(\beta_k\Gamma_k^*w_{v,k}-\beta_j\Gamma_j^*w_{v,j})$.
Note that $(\delta_{v,i},\beta_i')_{K_v(\beta_i)}=(s_{kj}\delta_{v,i},\beta_i')_{K_v(\beta_i)}$,
since $s_{kj}$ is a norm. Further using the expression for $\delta_{v,i}$ in equation \eqref{deltavi},
and $p_{kj}=\sqrt{\beta_k}\Gamma_k^*+\sqrt{\beta_j}\Gamma_j^*$,
we have $s_{kj}\delta_{v,i}=L_i(\mathfrak{q}_v),$ hence the pairing $\langle\cdot,\cdot\rangle_{\mathrm{Cas}}$
defined in equation
\eqref{cas} is the same as the Cassels-Tate pairing.
\hfill
\end{proof}

\section{Appendix}
In this section we verify the two computational claims made in Proposition \ref{spcaseeps} i.e.
$1/\Gamma_1(\sigma)=\frak{a}\cup\partial\frak{a}'(\sigma,-1,-1)$, and 
$\partial\eps(\sigma,\tau, \rho)= 
\partial\frak{a}\cup\frak{a}'(\sigma,\tau,\rho)$, when
$\chi'(\tau)=1$ and $\chi'(\rho)=-1$.

Let $\sigma$, $\tau$, $\rho\in G_K$ be such that
$\chi'(\tau)=\chi'(\rho)=-1$, then we have:
\begin{center}
\begin{tabular}{|c|c|c|}
     \hline
     $\cs$ & $1/\Gamma_1(\sigma)$ & $\mathfrak{a}\cup\partial\mathfrak{a}'(\sigma,-1,-1)$ \\
     \hline
     $\cv{0}$ & $\frac{p_1(1)^2}{p_1}=1$ & $1$ \\
     \hline
     $\cv{1}$ & $\frac{\sigma(p_1)p_1^2}{p_1}=s_1$ & $s_1$\\
     \hline
     $\cv{j}$ & $\frac{\sigma(p_1)p_{1j}^2}{p_1}= s_{j1}$ & $s_{j1}$\\
    \hline
\end{tabular}
\captionof{table}{$1/\Gamma_1(\sigma)=\mathfrak{a}\cup\partial\mathfrak{a}'(\sigma,-1,-1)$.}\label{table1}
\end{center}

Let $\sigma$, $\tau$, $\rho\in G_K$ be such that
$\chi'(\rho)=-1$, $\chi'(\tau)=1$ and $\chi'(\sigma)=1$, then if
$\cs=\cv{0}$, then
$$
\partial\eps(\sigma,\tau,\rho)=\frac{\sigma_s\eps(\sigma\cdot\chi(\tau),1,-1)}
	{\eps(\sigma\cdot\chi(\tau),1,-1)}=1=
	\partial\mathfrak{a}\cup\mathfrak{a}'(\sigma,\tau),
$$
and if $\ct = \cv{0}$, then
$$
\partial\eps(\sigma,\tau,\rho)=\frac{\eps(\cs,1,-1)}
	{\eps(\cs,1,-1)}=1=
	\partial\mathfrak{a}\cup\mathfrak{a}'(\sigma,\tau).
$$
The following table symbolically verifies all the other possible cases:
\begin{center}
\begin{tabular}{|c|c|c|}
     \hline
$\cs$ & $\sigma\cdot\ct$ & $\restr{\partial\eps(\cs,\sigma\cdot\ct,-1)}{\chi_1'(\sigma)=\chi_1'(\tau)=1}$ \\
\hline
$\cv{1}$ & $\cv{1}$ & $\sigma_s(p_1)p_1=s_1$ \\
\hline
$\cv{j}$ & $\cv{j}$ & $\sigma_s(p_{1j})p_{1j}=s_{1j}$ \\
\hline
$\cv{j}$ & $\cv{k}$ & $\frac{\sigma_s(p_{1k})p_{1j}}{p_1}=-1$ \\
\hline
$\cv{1}$ & $\cv{k}$ & $\frac{\sigma_s(p_{1k})p_1}{p_{1j}}=s_{k1}$ \\
\hline

$\cv{k}$ & $\cv{1}$ & $\frac{\sigma_s(p_1)p_{1k}}{p_{1j}}=s_{k1}$ \\
\hline
\end{tabular}
\captionof{table}{$\restr{\partial \eps}{\chi'(\tau)=1,\chi'(\rho)=-1}(\sigma,\tau,\rho)=\partial\mathfrak{a}\cup\mathfrak{a}'(\sigma,\tau,\rho).$}\label{table2}
\end{center}

\begin{bibdiv}
\begin{biblist} 

\bib{fisvanbeek}{article}{
	author = {van Beek,Monique},
	author = {Fisher, Tom},
	title = {Computing the Cassels-Tate pairing on 3-isogeny Selmer groups via cubic norm equations},
	journal = {Acta Arithmetica},
	volume = {185},
	number = {4},
	pages = {367--396},
	year = {2018},
}

\bib{browngrpcohomo}{book}{
title = {Cohomology of Groups},
author = {Brown, Kenneth S.},
year = {1982},
publisher = {Springer-Verlag New York},
}

\bib{cas62}{article}{
	author = {Cassels, J.W.S.},
	title = {Arithmetic on Curves of Genus 1. IV. Proof of the Hauptvermutung.},
	journal = {Journal f\"{u}r die reine und angewandte Mathematik},
	year = {1962},
      publisher = {De Gruyter},
	volume = {1962},
      number = {211},
      pages= {95 - 112},
}

\bib{cas98}{article}{
	author = {Cassels, J. W. S.},
	title = {Second descents for elliptic curves},
	journal = {Journal für die reine und angewandte Mathematik},
	year = {1998},
	volume = {1998},
	number = {494},
	pages = {101-- 127},
}

\bib{cfoss1}{article}{
author = {Cremona, J. E},
author ={Fisher, T. A},
author ={O'Neil,C.},
author= {Simon, D.},
author ={Stoll, M.},
doi = {doi:10.1515/CRELLE.2008.012},
url = {https://doi.org/10.1515/CRELLE.2008.012},
title = {Explicit n-descent on elliptic curves, I. Algebra},
journal = {Journal f\"{u}r die reine und angewandte Mathematik},
number = {615},
volume = {2008},
year = {2008},
pages = {121--155},
}

\bib{fisdonsimp}{article}{
	author = {Fisher, Tom},
	title = {On binary quartics and the Cassels-Tate pairing},
	year = {2022},
    journal = {Research in Number Theory},
    volume = {8},
    number = {74},
}

\bib{fisnew}{article}{
	author = {Fisher, Tom},
	author = {Newton, Rachel},
	title = {Computing the Cassels-Tate pairing on the 3-Selmer group of an elliptic curve},
	journal = {International Journal of Number Theory},
	volume = {10},
	number = {07},
	pages = {1881-1907},
	year = {2014},
}

\bib{yoga}{article}{
	title = {The yoga of the Cassels-Tate pairing},
	volume = {13},
	journal = {LMS Journal of Computation and Mathematics},
	publisher = {London Mathematical Society},
	author = {Fisher, Tom},
	author = {Schaefer, Edward F.},
	author = {Stoll, Michael},
	year = {2010},
	pages={451-460},
}

\bib{NSW2.3}{book}{
	author={Neukirch, J\"{u}rgen},
	author={Schmidt, Alexander},
	author={Wingberg, Kay},
	title={Cohomology of number fields},
	series={Grundlehren der Mathematischen Wissenschaften [Fundamental
	Principles of Mathematical Sciences]},
	volume={323},
	edition={2},
	publisher={Springer-Verlag, Berlin},
	date={2008},
	pages={xvi+825},
	isbn={978-3-540-37888-4},
	review={\MR{2392026}},
	doi={10.1007/978-3-540-37889-1},
	note={\url{https://www.mathi.uni-heidelberg.de/~schmidt/NSW2e/NSW2.3.pdf}},
}

\bib{psalb}{article}{
 title = {The Cassels-Tate Pairing on Polarized Abelian Varieties},
 author = {Poonen, Bjorn},
 author = {Stoll, Michael},
 journal = {Annals of Mathematics},
 number = {3},
 pages = {1109--1149},
 publisher = {Annals of Mathematics},
 volume = {150},
 year = {1999},
}

\bib{effective_p_descent}{article}{
	author = {Schaefer, Edward F.},
	author = {Stoll, Michael},
	title ={How to Do a p-Descent on an Elliptic Curve},
	journal = {Transactions of the American Mathematical Society},
 	number = {3},
 	pages = {1209--1231},
 	publisher = {American Mathematical Society},
	volume = {356},
	 year = {2004,}
}

\bib{ctpoddhyp}{unpublished}{
    author = {Shukla, Himanshu},
    title = {Computing Cassels-Tate pairing for odd-degree hyperelliptic curves},
    year = {2023},
    note = {in preparation},
}

\bib{silverman1}{book}{
	title = {The Arithmetic of Elliptic Curves},
	author = {Silverman, Joseph H.},
	publisher = {Springer-Verlag New York},
	pages = {XX, 513},
	year = {2009},
}

\bib{stosurvey}{article}{
	author = {Stoll,Michael},
	title ={Descent on Elliptic Curves},
	year = {2012},
	volume = {36},
	journal = {Panoramas et Synth\`{e}ses},
	pages = {151--179},
	publisher = {Soci\`{e}t\`{e} Math\`{e}matique de France},
}

\bib{sd22m}{article}{
	author = {Swinnerton-Dyer, Peter},
	title = {$2^n$-descent on elliptic curves for all n},
	journal = {Journal of the London Mathematical Society},
	volume = {87},
	number = {3},
	pages = {707-723},
	year = {2013}
}

\bib{tatectp}{inproceedings}{
	author = {Tate, John},
	title = {Duality theorems in Galois cohomology of number fields},
	booktitle={Proceedings of International Congress for Mathematicians (Stockholm) 1963},
	year = {1963},
	pages ={288--295},
	number = {18},
	publisher = {Inst. Mittag-Leffler, Djursholm},
}

\bib{weilrec}{article}{
	Author = {Weil, Andr\'{e}},
 Title = {Generalization of abelian functions},
 Journal = {J. Math. Pure Appl. (9)},
 ISSN = {0021-7824},
 Volume = {17},
 Pages = {47--87},
 Year = {1938},
 Publisher = {Elsevier (Elsevier Masson), Paris},
 }

\bib{locfldserre}{book}{
title = {Local Fields},
author = {Serre, J.P.},
year = {1979},
publisher = {Springer-Verlag New York},
}

\end{biblist}
\end{bibdiv}

\end{document}